\theoremstyle{plain}
\newtheorem{theorem}{Theorem}
\newtheorem{lemma}[theorem]{Lemma}
\theoremstyle{definition}
\newtheorem{assumption}{Assumption}
\title{Some convergence analysis for multicontinuum homogenization}
\author{ Wing Tat Leung\footnote{Department of Mathematics, City University of Hong Kong, Hong Kong}}
\begin{document}
\maketitle
\begin{abstract}
In this paper, we provide an analysis of a recently proposed multicontinuum homogenization technique \cite{efendiev2023multicontinuum}.  The analysis differs from those used in classical homogenization methods for several reasons. First, the cell problems in multicontinuum homogenization use constraint problems and can not be directly substituted into the differential operator. Secondly, the problem contains high contrast that remains in the homogenized problem. The homogenized problem averages the microstructure while containing the small parameter. In this analysis, we first based on our previous techniques, CEM-GMsFEM \cite{chung2018constraint}, to define a CEM-downscaling operator that maps the multicontinuum quantities to an approximated microscopic solution. Following the regularity assumption of the multicontinuum quantities, we construct a downscaling operator and the homogenized multicontinuum equations using the information of linear approximation of the multicontinuum quantities. The error analysis is given by the residual estimate of the homogenized equations and the well-posedness assumption of the homogenized equations.

\end{abstract}

\section{Introduction}
Many challenges exhibit a multiscale nature, such as the flow in porous media with multiscale highly heterogeneous parameter and displaying high contrast. Simulating these complex scenarios typically involves employing a coarse computational grid, where the grid size significantly larger than the scales of heterogeneities. 
These heterogeneities present in the medium at the microscopic scale give rise to diverse properties across different scales which affect the accuracy of the numerical simulation.
Upscaling models play a pivotal role in achieving efficient and accurate numerical simulations fin these cases, thereby enhancing the effectiveness of inversion models, uncertainty quantification models, and optimization models. Moreover, multiscale upscaling models contribute to a comprehensive understanding of physical phenomena, facilitating the establishment of connections between microscopic and macroscopic data. 

Many existing upscaling methods have demonstrated significant success across various applications. The process of obtaining upscaled models can be approached through various methodologies and techniques. For problems exhibiting scale separation with low contrast medium, single continuum upscaling models have matured considerably. In works such as \cite{bourgeat2004approximations,matache2000homogenization,jikov2012homogenization,lipton2006homogenization,engquist2007heterogeneous,engquist2008asymptotic,henning2014localized, jenny2003multi,arbogast2007multiscale,arbogast2002implementation}, various upscaling approaches, including homogenization, numerical homogenization, heterogeneous multiscale methods, and localized orthogonal decomposition (LOD) method, have been introduced. These models effectively address problems in many applications. However, they may exhibit reduced accuracy in scenarios where the multiscale problem lacks scale separation and involves multiple crucial macroscopic quantities, such as problems with high-contrast parameters.

In this paper, we would like to consider a multicontinuum model which can capture the property of the solution with multiple important macroscopic quantities. In the existing literature, several upscaling methods for multicontinuum models are designed to handle non-separable scales. Notable examples include multicontinuum theories \cite{bedford1972multi,fafalis2012capability} and the computational continua framework \cite{fish2010computational}. Additionally, methods like multiscale enrichment \cite{fish2005multiscale}, generalized multiscale finite element method \cite{efendiev2013generalized,efendiev2014generalized}, dual-porosity model \cite{gerke1993dual,arbogast1990derivation}, and others \cite{roberts2017slowly, showalter1991micro,owhadi2007metric,aifantis1979continuum,bunoiu2019upscaling,iecsan1997theory, chai2018efficient,chung2017coupling} stand out as effective approaches for multicontinuum models.
While some of these methods can directly yield a discretized macroscopic model from the microscopic model, this approach may limit the flexibility of numerical discretization choices and introduce indirect connections between models at different scales. Some of these methods may lack a detailed analysis of the upscaling model or focus solely on the dual continuum case, assuming the continua to be fracture and matrix regions. To obtain a multicontinuum upscaling model which can cooperate with any numerical discretization, we propose a multicontinuum homogenization method.

As we mentioned before, homogenization of multiscale problems are widely used in many applications.
One of commonly used homogenization techniques include the asympototic
expansion \cite{bensoussan2011asymptotic,blanc2023homogenization}. 
In these methods, in each representative volume, the effective 
property is computed based on local solutions. However, this approach 
cannot be used for complex heterogeneities and 
approaches based on multicontinuum
ideas are needed \cite{efendiev2023multicontinuum} (and the references therein).

In \cite{efendiev2023multicontinuum}, we propose a general
 approach for multicontinuum
homogenization in the context of numerical homogenization.
The main idea of the method is to represent the solution via its averages
and its average gradients
in each continua. Continua are defined via some characteristic functions
in each RVE (typically related to subregion).
The resulting formal asymptotic expansion entails cell problems 
(defined in each RVE)
and
corresponding the smooth macroscopic functions defined in the whole domain. 
At each macroscale
point, we assign several macroscopic variables. The resulting 
macroscpoopic equations consist of a  diffusion-convection-reaction
system.
The proposed method differs from classical homogenization in several
aspects. First, at each macroscopic point, several macroscale variables
are assigned. Secondly, the cell problems are described via local problems
with constraints in each continua, including the averages of the solution.
These differing aspects make the analysis of the multicontinuum
homogenization approach significantly different.

The analysis of multicontinuum method is based on nonlocal multicontinumm
(NLMC)
approach developed in our previous work \cite{vasilyeva2019nonlocal,chung2018constraint}. Nonlocal multicontinuum approach
is shown to converge independent of the contrast. However, the resulting 
equations appear to be discrete and not in the form of macroscopic laws. 
The goal is to derive
macroscopic conservation laws, which leads to multicontinuum homogenization.
Multicontinuum
homogenization requires smooth macroscopic variables defined globally
and, consequently,  the analysis requires stronger assumptions
compared to nonlocal multicontinuum approach.

 In our analysis, as a first step, we write down
a corrector for multicontinuum homogenization. Secondly,
we  show  the closeness
of multicontinuum homogenization solution to nonlocal multicontinuum
solution, which has similar cell problems. The latter is shown assuming
smooth macroscopic variables. Thirdly, we use the macroscale energy
and compare the multicontinuum homogenized solution corrector to 
the exact solution average used in this corrector. We show that
the error is small if macroscopic functions are smooth. Assuming
that the macroscopic operator is invertible with a bound, we 
can obtain the closeness of the solutions.

We make several simplifying assumptions in the proof, which can be 
removed. For example, we only assume two continua, though the proof
can be extended to an arbitrary number of continua with appropriate assumptions.

This paper is outlined as follows. In the next section, we will discuss the problem setting and some preliminary definition for the latter discussion. In section \ref{sec:homo}, we will introduce our proposed multicontinuum homogenization method. In section \ref{sec:analy}, we will analyze the convergence result of our proposed method.

\section{Preliminaries} \label{sec:Pre}

In this paper, we consider the following partial differential equation
\begin{equation}\label{eq:pde}
    -\nabla\cdot(\kappa_{\epsilon}\nabla u_{\epsilon})=f\;\text{in }\Omega
\end{equation}
with $u_{\epsilon}|_{\partial\Omega}=0$. We assume $\kappa_{\epsilon}$ is a $\epsilon$ depending
heterogeneous permeability with high contrast. That is,
$\cfrac{\max_{x\in\Omega}\kappa_{\epsilon}(x)}{\min_{x\in\Omega}\kappa_{\epsilon}(x)}=\zeta_{\epsilon}\gg1$.
We remark that the discussion in this paper can be extended to a general elliptic differential operator. 
In this paper, we will focus on the weak problem defined as finding $u_{\epsilon}\in H_{0}^{1}(\Omega)$
satisfying 
\[
a_{\epsilon}(u_{\epsilon},v)=(f,v),\;\forall v\in H_{0}^{1}(\Omega),
\]
where $(\cdot,\cdot)$ is the standard $L^{2}$ inner product and $a(\cdot,\cdot)$ is defined as
\[
a_{\epsilon}(v,w)=\int_{\Omega}\kappa_{\epsilon}\nabla v\cdot\nabla w.
\]
In the following discussion, we will extend the domain of the bilinear operator $a_\epsilon(\cdot,\cdot)$ from $H^1(\Omega)\times H^1(\Omega)$ to piece-wise $H^1$ function. For example, if $u|_{\omega_i}$ and $v|_{\omega_i}$ are $H^1$ functions with $\Omega=\omega_1\cup \omega_2$ and $\omega_1\cap \omega_2$, 
\[
a_{\epsilon}(u,v) = \int_{\omega_1}\kappa_{\epsilon}\nabla u|_{\omega_1} \cdot\nabla v|_{\omega_1} + \int_{\omega_2}\kappa_{\epsilon}\nabla u|_{\omega_2}\cdot\nabla v|_{\omega_2}.
\]
For any subdomain $\omega\subset \Omega$, we define norms $\|\cdot \|_{a(\omega)}$ and $\|\cdot\|_a$ by 
\[
\|u\|_{a(\omega)} := \Big(\int_{\omega}\kappa |\nabla u|^2\Big)^{\frac{1}{2}}
\]
and $\|u\|_{a} :=\|u\|_{a(\Omega)}$. 

To simplify the discussion, we consider the domain is a square domain with $\Omega= [0,1]^{d}$ and it can be separated
into two subdomains, $\Omega_{0}$ and $\Omega_{1}=\Omega\backslash\Omega_{0}$.
The subdomains $\Omega_{1}$ and $\Omega_{0}$ represent the region
with high and low permeability with respectively. 
We remark that the discussion can be extended to the cases with multiple continua easily.

Before delving into our proposed homogenization method, we will initially introduce several scales and partitions of domain to our method. This paper considers three distinct scales: the physical microscopic scale $\epsilon$, the "observing" scale $H_\epsilon$, and the computational upscaling scale $H$, with $\epsilon \leq H_\epsilon < H$.
The physical microscopic scale $\epsilon$ and the computational upscaling scale $H$ are fairly straightforward. The physical microscopic scale refers to the scale of the medium's microscopic structure and the differential operator, such as the period of the medium's micro-scale pattern and the medium's high contrast. The computational upscaling scale $H$ can be seen as a macroscopic computational scale for homogenized equations, allowing the homogenized coefficient to be depicted as a piece-wise constant function at this scale.
The observing scale $H_\epsilon$ is the scale at which the multicontinuum quantities are defined. The macroscopic quantities can be perceived as local averages of the multiscale solution in local regions with scale $H_\epsilon$. Choosing different observing scales may yield significant quantities and a homogenized equation of a different nature. For instance, if $H_\epsilon$ is chosen to be much smaller than $\epsilon$, since the multiscale solution varies slowly at this scale, the original equation is already effective and there are no separated multiple continua. If the observing scale is so large that all of the large-scale structures are averaged at this scale, such as when $H_\epsilon$ is larger than the high contrast channel length, the multiple continua feature may also not be observable. Essentially, this observing scale defines the scale for the homogenized equations. We remark that even if there is no multiple continua feature in a certain scale, our proposed multicontinuum upscaling method is still applicable, but the model may force all of the continua to be the same value.
In practice, the observing scale and the computational upscaling scale are selected based on the spatial and temporal scale of interest in the application. It should be noted that continuous homogenized equations with smooth coefficients can be obtained by considering all $\epsilon$, $H_\epsilon$ and $H$ converge to zero in a correct ratio.

We then define a family of partition of the computational domain $\Omega$ corresponding to different scale. Given $z\in[0,H_{\epsilon})^{d}$, we define a rectangular partition
$\mathcal{T}_{H_{\epsilon}}(z)$ with mesh size $H_{\epsilon}$ and
\[
\mathcal{T}_{H_{\epsilon}}(z):=\{K\subset\Omega|\;K=(x_{i}+(-\cfrac{H_{\epsilon}}{2},\cfrac{H_{\epsilon}}{2})^{d})\cap\Omega,\;\cfrac{x_{i}-z}{H_{\epsilon}}\in\mathbb{Z}^{d}\}.
\]
 For each coarse element $K_{z}^{(i)}=K_{z,H_{\epsilon}}(x_{i}):=(x_{i}+(-\cfrac{H_{\epsilon}}{2},\cfrac{H_{\epsilon}}{2})^{d})\cap\Omega\in\mathcal{T}_{H_{\epsilon}}(z)$,
we can divide the element into two subregions $K_{z,0}^{(i)}=K_{z}^{(i)}\cap\Omega_{0}$
and $K_{z,1}^{(i)}=K_{z}^{(i)}\cap\Omega_{1}$. To ensure the regularity
of the mesh, we combine the element $K_{z,H_{\epsilon}}(x_{i})$ with
$x_{i}\notin\Omega$ with their nearest element such that $\cup_{x_{i}\in(z+H_{\epsilon}\mathbb{Z}^{d})\cap\Omega}\bar{K}_{z,H_{\epsilon}}(x_{i})=\Omega$,
$K_{z,H_{\epsilon}}(x_{i})\cap K_{z,H_{\epsilon}}(x_{j})=\emptyset$
and aspect ratio of $K_{z,H_{\epsilon}}(x_{i})$ is smaller $\cfrac{\ensuremath{1}}{4}$
for all $x_{i}$. We then define the auxiliary basis $\psi_{i,0}^{z}=\psi_{0}^{z,H_{\epsilon}}(x_{i},x):=\chi_{K_{z,0}^{(i)}}$ and $\psi_{i,1}^{z}=\psi_{1}^{z,H_{\epsilon}}(x_{i},x):=\chi_{K_{z,1}^{(i)}}$
where $\chi_{Y}$ is the indicator function of $Y$ with $\chi_{Y}(x)=1\;\forall x\in Y$
and $\chi_{Y}(x)=0$ otherwise. For any $z\in[0,H_{\epsilon})^{d}$,
we define a index set $I_{z}=\{x_{i}|\;K_{z,H_{\epsilon}}(x_{i})\in\mathcal{T}_{H_{\epsilon}}(z)\}$ which contain all of the center points of the course elements in the partition $\mathcal{T}_{H_{\epsilon}}(z)$.
The partition $\mathcal{T}_{H}(z)$ with mesh size $H$ is defined in a similar way. The index $I_{z,H}$ is then defined correspondingly.

Following the idea of NLMC, we will define global basis functions $\phi_{i,0}^{z}\in H_{0}^{1}(\Omega)$
and $\phi_{i,1}^{z}\in H_{0}^{1}(\Omega)$ by corresponding constrained energy minimization problems in the $\mathcal{T}_{H_{\epsilon}}(z)$ such that 
\begin{align*}
a_{\epsilon}(\phi_{l,i}^{z},v)+\sum_{x_{k}\in I_{z}}\sum_{j=0,1}c_{k,j}(\psi_{k,j}^{z},v) & =0\\
(\phi_{l,i}^{z},\psi_{k,j}^{z}) & =\delta_{ij}\delta_{lk}\int_{\Omega}\psi_{k,j}^{z}.
\end{align*}
We denote the space $V_{glo,H_{\epsilon}}^{z}$ as the span of all global basis functions
$\phi_{k,i}^{z}$, that is, \[V_{glo,H_{\epsilon}}^{z}=\text{span}_{k,j}\{\phi_{k,j}^{z}\}=
\{v\in H_0^1(\Omega)|\; v=\sum_{k,j} v_{k,j}\phi_{k,j}^{z}, v_{k,j}\in \mathbb{R}.\}\] 

The following theorem is one of the main theorems in \cite{chung2018constraint},
 which gives
an $O(H_\epsilon)$ error bound for the CEM-GMsFEM method with global basis. 
\begin{theorem} \label{thm:NLMC}
Let $f\in L^{2}(\Omega)$ and $u_{glo,H_{\epsilon}}^{z}\in V_{glo,H_{\epsilon}}^{z}$ be the solution
of 
\[
a_{\epsilon}(u_{glo,H_{\epsilon}}^{z},v)=(f,v)\;\forall v\in V_{glo,H_{\epsilon}}^{z}.
\]
We have 
\[
\int_{\Omega}(u_{glo,H_{\epsilon}}^{z}-u_{\epsilon})\psi^z_{k,j}=0\;\forall k,j,
\]
and
\[
\|u_{glo,H_{\epsilon}}^{z}-u_{\epsilon}\|_{a}=O(H_{\epsilon}).
\]
\end{theorem}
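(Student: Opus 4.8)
The plan is to identify $V_{glo,H_{\epsilon}}^{z}$ as the $a_\epsilon$-orthogonal complement of the kernel of the cell-averaging projection, and then to read off both conclusions from Galerkin orthogonality together with a cell-wise Poincar\'e estimate. Set $V_{aux}^{z}=\mathrm{span}\{\psi_{k,j}^{z}\}$ and let $\pi:L^{2}(\Omega)\to V_{aux}^{z}$ be the $L^{2}$-orthogonal projection; since the $\psi_{k,j}^{z}$ are indicator functions of the disjoint subregions $K_{z,j}^{(k)}$, $\pi v$ is the piecewise average of $v$ over each subregion. Define $\tilde V^{z}=\{v\in H_{0}^{1}(\Omega):\pi v=0\}=\{v\in H_{0}^{1}(\Omega):(v,\psi_{k,j}^{z})=0\ \forall k,j\}$. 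Testing the first defining equation of $\phi_{l,i}^{z}$ against any $v\in\tilde V^{z}$ annihilates the multiplier term, so $a_{\epsilon}(\phi_{l,i}^{z},v)=0$; hence $V_{glo,H_{\epsilon}}^{z}\perp_{a}\tilde V^{z}$. The normalization $(\phi_{l,i}^{z},\psi_{k,j}^{z})=\delta_{ij}\delta_{lk}\int_\Omega\psi_{k,j}^{z}$ shows that $\pi$ maps each global basis function to the matching auxiliary function, so $\pi$ carries $V_{glo,H_{\epsilon}}^{z}$ onto $V_{aux}^{z}$. Consequently, for any $w\in H_{0}^{1}(\Omega)$ there is $\phi\in V_{glo,H_{\epsilon}}^{z}$ with $\pi\phi=\pi w$, whence $w-\phi\in\tilde V^{z}$; thus $H_{0}^{1}(\Omega)=V_{glo,H_{\epsilon}}^{z}\oplus_{a}\tilde V^{z}$ and $\tilde V^{z}$ is exactly the $a_\epsilon$-orthogonal complement of $V_{glo,H_{\epsilon}}^{z}$.

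For the first identity I would subtract the weak form of \eqref{eq:pde}, restricted to $v\in V_{glo,H_{\epsilon}}^{z}\subset H_{0}^{1}(\Omega)$, from the discrete equation to obtain the Galerkin orthogonality $a_{\epsilon}(u_{glo,H_{\epsilon}}^{z}-u_{\epsilon},v)=0$ for all $v\in V_{glo,H_{\epsilon}}^{z}$. Combined with the decomposition above, this forces $e:=u_{glo,H_{\epsilon}}^{z}-u_{\epsilon}\in\tilde V^{z}$, i.e.\ $\pi e=0$, which is precisely $\int_{\Omega}(u_{glo,H_{\epsilon}}^{z}-u_{\epsilon})\psi_{k,j}^{z}=0$ for all $k,j$.

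For the energy estimate I would use $e\in\tilde V^{z}$ together with $V_{glo,H_{\epsilon}}^{z}\perp_{a}\tilde V^{z}$ to write $\|e\|_{a}^{2}=a_{\epsilon}(e,e)=a_{\epsilon}(u_{\epsilon},e)-a_{\epsilon}(u_{glo,H_{\epsilon}}^{z},e)=a_{\epsilon}(u_{\epsilon},e)=(f,e)$, where the middle term vanishes by orthogonality and the last step is the weak form with test function $e\in H_{0}^{1}(\Omega)$. Since $\pi e=0$ and $\pi$ is self-adjoint, $(f,e)=(f-\pi f,e)$, and because $e$ has vanishing average on each subregion $K_{z,j}^{(k)}$ of diameter $O(H_{\epsilon})$ a cell-wise Poincar\'e inequality gives $\|e\|_{L^{2}(K_{z,j}^{(k)})}\le C H_{\epsilon}\|\nabla e\|_{L^{2}(K_{z,j}^{(k)})}$. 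Summing over cells and applying Cauchy--Schwarz yields $(f,e)\le C H_{\epsilon}\|f\|_{L^{2}}\|e\|_{a}$, so that $\|e\|_{a}\le C H_{\epsilon}\|f\|_{L^{2}}=O(H_{\epsilon})$.

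The main obstacle is making the Poincar\'e step robust in the contrast. A naive inequality converts $\|\nabla e\|_{L^{2}}$ into the energy norm only through a factor $\kappa_{\min}^{-1/2}$, which degrades as $\zeta_{\epsilon}\to\infty$. The contrast-independence advertised for the nonlocal multicontinuum framework instead relies on the auxiliary indicators $\psi_{k,j}^{z}$ resolving the high-contrast continua, so that the controlling constant is the spectral gap $\Lambda$ of the constrained eigenproblem rather than $\kappa_{\min}$. Establishing $\Lambda\gtrsim H_{\epsilon}^{-2}$ uniformly in $\zeta_{\epsilon}$ --- equivalently, a weighted estimate $\|e\|_{s}\le\Lambda^{-1/2}\|e\|_{a}$ for $e\in\tilde V^{z}$ --- is the delicate ingredient, and is exactly the property that Theorem~\ref{thm:NLMC} imports from \cite{chung2018constraint}.
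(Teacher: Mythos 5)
The paper offers no proof of Theorem \ref{thm:NLMC}: it is quoted directly from \cite{chung2018constraint}, so there is no in-paper argument to compare against. Your proposal reconstructs the standard CEM-GMsFEM/NLMC proof and its structure is sound: the $a_\epsilon$-orthogonal splitting $H_0^1(\Omega)=V^z_{glo,H_\epsilon}\oplus_a\tilde V^z$ (obtained, as you say, by testing the Lagrange-multiplier formulation of $\phi^z_{l,i}$ against $\tilde V^z$ and using the normalization constraints to see that $\pi$ maps $V^z_{glo,H_\epsilon}$ onto $V^z_{aux}$), Galerkin orthogonality forcing $e=u^z_{glo,H_\epsilon}-u_\epsilon$ into $\tilde V^z$ and hence yielding the conservation identity, and then the energy bound via $\|e\|_a^2=|(f,e)|$ and a cell-wise Poincar\'e inequality. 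Two remarks. First, a harmless sign slip: $a_\epsilon(e,e)=a_\epsilon(u^z_{glo,H_\epsilon},e)-a_\epsilon(u_\epsilon,e)=-(f,e)$, not $+(f,e)$. Second, and more substantively, the Poincar\'e step must be applied on the subregions $K^{(i)}_{z,j}=K^{(i)}_z\cap\Omega_j$ rather than on whole cells; these sets need not be connected or shape-regular, and converting $\|\nabla e\|_{L^2}$ into $\|e\|_a$ naively costs a factor $\kappa_{\min}^{-1/2}$ (equivalently, the contrast, unless $\kappa_{\min}$ is normalized). You correctly flag that the contrast- and geometry-robust replacement of this inequality is precisely the nontrivial ingredient supplied by \cite{chung2018constraint}; so your proposal is an honest and correctly structured reduction of the theorem to that imported estimate rather than a self-contained proof, which matches how the paper itself treats the statement.
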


As a corollary, we can write $u_{glo,H_{\epsilon}}^{z}$ in the form of $u_{glo,H_{\epsilon}}^{z}=\sum_{x_{l}\in I_{z}}\sum_{l}U_{l,i}^{H_{\epsilon}}\phi_{l,i}^{z}$
where the coefficients $U_{l,i}=\cfrac{(u_{\epsilon},\psi^z_{l,i})}{\int_{\Omega}\psi^z_{l,i}}$ are the local average of the solution $u_\epsilon$ in coarse elements $K_{z,i}^{(l)}$. 

\section{Multicontinumm homogenization} \label{sec:homo}
In this section, we will introduce the multicontinumm homogenization method and develop the proposed homogenized equation for the problem (\ref{eq:pde}).
This section is basically separated into two parts. The first part is the discussion of the downscaling operators using different multiscale features capturing basis functions such that the NLMC global basis functions. The next part introduces the multicontinumm homogenized equations using the downscaling operators and the original multiscale bilinear form with averaging.
To start our discussion, we will first define two downscaling operators $$P_{H_{\epsilon},H}^{z}:C^{1}(\Omega)^{2}\rightarrow H^{1}(\mathcal{T}_{H}(z)):=\{u\in L^{2}(\Omega)|u|_{K}\in H^{1}(K),\;\forall K\in\mathcal{T}_{H}(z)\}$$
$$P_{H_{\epsilon}}^{z}:C^{1}(\Omega)^{2}\rightarrow H^{1}(\Omega),$$
where
\[
P_{H_{\epsilon},H}^{z}(U_{0},U_{1}):=\sum_{i=0,1}\sum_{x_{l}\in I_{z,H}}\chi_{K_{z,H}(x_{l})}\Big(U_{i}(x_{l})\eta_{z}(x_{l},\cdot)+\sum_{k}\partial_{k}U_{i}(x_{l})\eta_{z,i}^{(k)}(x_{l},\cdot)\Big),
\]
\[
P_{H_{\epsilon}}^{z}(U_0,U_1):=\sum_{i=0,1}\sum_{x_{l}\in I_{z}}U_i(x_l)\phi_{l,i}^{z}.
\]
where $\eta_{i}$ and $\eta_{i}^{(l)}$ are defined as follows.

$\eta_{i}$ and $\eta_{i}^{(l)}$ are the constant-representing and linear-representing basis functions for each continuum which satisfy the following system
of equations 
\begin{align*}
\int_{K_{x,H}(x)^{+}}\kappa_{\epsilon}(y)\nabla_{y}\eta_{z,i}(x,y)\nabla_{y}v & =\sum_{x_{j}\in I_{x}\cap K_{x,H}(x)^{+}}\sum_{k=0,1}d_{j,k}(\psi_{k}(x_{j},y),v)\\
(\eta_{z,i}(x,y),\psi_{k}(x_{j},y)) & =\delta_{ki}\int_{K_{x,H}(x)^{+}}\psi_{k}(x_{j},y),\;\forall x_{j}\in I_{x}\cap K_{x,H}(x)^{+}
\end{align*}
and 
\begin{align*}
\int_{K_{x,H}(x)^{+}}\kappa_{\epsilon}(y)\nabla_{y}\eta_{z,i}^{(l)}(x,y)\nabla_{y}v & =\sum_{x_{j}\in I_{x}\cap K_{x,H}(x)^{+}}\sum_{k=0,1}d_{j,k}(\psi_{k}(x_{j},y),v)\\
(\eta_{z,i}^{(l)}(x,y),\psi_{k}(x_{j},y)) & =\delta_{ki}\int_{K_{x,H}(x)^{+}}(x_j^{(l)}-x^{(l)})\psi_{k}(x_{j},y),\;\forall x_{j}\in I_{x}\cap K_{x,H}(x)^{+},
\end{align*}
where $K_{x_l,H}(x_l)^+$ is the oversampling domain which enrich $K_{x_l,H}(x_l)$ by $k$ $H_\epsilon$-layer, $x^{(l)}$ is the $l$-th coordinate of $x\in\mathbb{R}^{d}$. 
We remark that the definition of the downscaling operators $P^z_{H_{\epsilon},H}$ and $P_{H_{\epsilon}}^{z}$ can be extended in a weaker sense by replacing $U(x_i)$ by local average of $U_i$, $\overline{U}_i(x_l):=\frac{\int_{K_{x_l,H_{\epsilon}}(x_l)}U_i}{|K_{x_l,H_{\epsilon}}(x_l)|}$. 

We next define the $L_2$ projection operator onto the auxiliary space $V^{z}_{aux,i}:=\text{span}_{l}\{\psi^z_{l,i}\}$ such that $\Pi_{0,H_{\epsilon}}^{z}$ and $\Pi_{1,H_{\epsilon}}^{z}$ satisfying 
\[
\Pi_{i,H_{\epsilon}}^{z}(v):=\sum_{x_{l}\in I_{z,H_{\epsilon}}}\cfrac{\int_{K_{z,H_{\epsilon}}(x_{l})\cap\Omega_{i}}v}{\int_{K_{z,H_{\epsilon}}(x_{l})\cap\Omega_{i}}1}\psi_{l,i}^{z}.
\]
Using theorem \ref{thm:NLMC}, we have $P_{H_{\epsilon}}^{z}(\Pi_{0,H_{\epsilon}}^{z}(u_{\epsilon}),\Pi_{1,H_{\epsilon}}^{z}(u_{\epsilon}))=u_{glo,H_{\epsilon}}^{z}$.

We will then use the downscaling operators to define the effective operator. Before introducing the effective operator, we will relate the local average of the summation, with respect to the reference point $z$, to the standard measure of domain $\Omega$. 
We consider $\rho_{\tau}$ be a smooth function such that
\[
\lim_{\tau}\rho_{\tau}(x-y)=\delta_{y}\;\text{in }C(\Omega)^{*}.
\]

We next define $\tilde{\rho}_{\tau}(x,z)=\sum_{x_{l}\in I_{z}}\rho_{\tau}(x-x_{l})$,
$\omega_{\tau}:\Omega\rightarrow\mathbb{R}$ such that
\[
\cfrac{1}{\omega_{\tau}(x)}=\int_{[0,H]^{d}}\tilde{\rho}_{\tau}(x,z)dz>0.
\]
and we can show $\omega=\lim_{\tau\rightarrow0}\omega_{\tau}=1$.
For any $g\in C(\Omega)$, we have 
\begin{equation}\label{eq:sum2int}
\begin{split}
\int_{[0,H]^{d}}\sum_{x_{l}\in I_{z,H}}g(x_{l})dz & =\lim_{\tau}\int_{[0,H]^{d}}\int_{\Omega}\tilde{\rho}_{\tau}(x,z)g(x)dxdz\\
 & =\lim_{\tau}\int_{\Omega}\cfrac{1}{\omega_{\tau}(x)}g(x)dx\\
 & = \int_{\Omega}g(x)dx. 
 \end{split}
\end{equation}
Thus, we can extend the definition of $\int_{z\in[0,H]^{d}}\sum_{x_{l}\in I_{z,H}}$ in a weaker sense.

We next define a bilinear operator $\tilde{a}^z_{glo,H_\epsilon}:C^{1}(\Omega)^{2}\times C^{1}(\Omega)^{2}\rightarrow\mathbb{R}$ by
\begin{align*}
\tilde{a}^z_{glo,H_\epsilon}((U_0,U_1),(V_0,V_1))&:= a_{\epsilon}(P_{H_{\epsilon}}^{z}(U_0,U_1),P_{H_{\epsilon}}^{z}(V_0,V_1)).
\end{align*}
These bilinear operators can be extended in a weaker sense such that the bilinear operators are well-defined in $H^1(\Omega)^2$. 
By theorem \ref{thm:NLMC}, we can obtain 
\begin{equation} \label{eq:NLMC_residual}
    \tilde{a}^z_{glo,H_\epsilon}((\Pi_{0,H_{\epsilon}}^{z}(u_{\epsilon}),\Pi_{1,H_{\epsilon}}^{z}(u_{\epsilon})),(\Pi_{0,H_{\epsilon}}^{z}(v),\Pi_{1,H_{\epsilon}}^{z}(v))) = (f,P_{H_{\epsilon}}^{z}(\Pi_{0,H_{\epsilon}}^{z}(v),\Pi_{1,H_{\epsilon}}^{z}(v))),
\end{equation}
for any $v\in H^1_0(\Omega)$, and $\|P_{H_{\epsilon}}^{z}(\Pi_{0,H_{\epsilon}}^{z}(u_{\epsilon}),\Pi_{1,H_{\epsilon}}^{z}(u_{\epsilon})) - u_\epsilon\|_a\leq CH_\epsilon$.
Therefore, if we define an effective bilinear operator as $\tilde{a}^z_{glo, H_\epsilon}(\cdot,\cdot)$, the projection of the multiscale solution $u_\epsilon$ satisfies the upscaled equation in the sense of (\ref{eq:NLMC_residual}).
However, this effective operator is not written as a form of system of differential equations and it brings difficulties to the analysis of the effective model and the numerical implementation. In additional, these effective operators are not strongly coercive with a given $z$.

We will next define a homogenized bilinear operator given in a form of partial differential operator using a linear approximation basis functions $\eta_{x,i},\eta^{(k)}_{x,i}$ and a local averaging in computational scale, $H$. The homogenized bilinear operator $\tilde{a}_{H_{\epsilon},H}:H^{1}(\Omega)^{2}\times H^{1}(\Omega)^{2}\rightarrow\mathbb{R}$ is defined as
\begin{equation}
\begin{split}
\tilde{a}_{H_{\epsilon},H}((U_{0},U_{1}),(V_{0},V_{1}))  :=\\
\int_{\Omega}\cfrac{1}{H^{d}}\int_{K_{x,H}(x)}\kappa(y)\nabla_{y}\sum_{i}\Big(U_{i}(x)\eta_{x,i}(x,y)+\sum_{k}\partial_{k}U_{i}(x)\eta_{x,i}^{(k)}(x,y)\Big)\cdot\\
\nabla_{y}\sum_{j}\Big(V_{j}(x)\eta_{x,j}(x,y)+\sum_{k}\partial_{k}V_{j}(x)\eta_{x,j}^{(k)}(x,y)\Big)dydx\\
  = \int_{\Omega}\Big(\sum_{i,j,k,l}\alpha_{kl,H_{\epsilon},H}^{ij}(x)\partial_{k}U_{i}(x)\partial_{l}V_{j}(x)+\sum_{i,j,k}\beta_{k,H_{\epsilon},H}^{ij}(x)\partial_{k}U_{i}(x)V_{j}(x)+\\
\sum_{i,j,k}\beta_{k,H_{\epsilon},H}^{ji}(x)\partial_{k}V_{j}(x)U_{i}(x)+\sum_{i,j}\gamma_{H_{\epsilon},H}^{ij}(x)U_{i}(x)V_{j}(x)\Big)dx,
\end{split} \label{eq:upscale_operator}
\end{equation}
where 
\begin{align*}
\alpha_{kl,H_{\epsilon},H}^{ij}(x) & =\cfrac{1}{H^{d}}\int_{K_{x,H}(x)}\kappa(y)\nabla_{y}\eta_{x,i}^{(k)}(x,y)\cdot\nabla_{y}\eta_{x,j}^{(l)}(x,y)dy\\
\beta_{k,H_{\epsilon},H}^{ij}(x) & =\cfrac{1}{H^{d}}\int_{K_{x,H}(x)}\kappa(y)\nabla_{y}\eta_{x,i}^{(k)}(x,y)\cdot\nabla_{y}\eta_{x,j}(x,y)dy\\
\gamma_{H_{\epsilon},H}^{ij}(x) & =\cfrac{1}{H^{d}}\int_{K_{x,H}(x)}\kappa(y)\nabla_{y}\eta_{x,i}(x,y)\cdot\nabla_{y}\eta_{x,j}(x,y)dy.
\end{align*}

We can check that the effective operator $\tilde{a}_{H_{\epsilon},H}$ is an averaging of the multiscale operator $a_{\epsilon}$ in the sense of 
\begin{equation}
  \tilde{a}_{H_{\epsilon},H}((U_{0},U_{1}),(V_{0},V_{1}))=\cfrac{1}{H^{d}}\int_{[0,H]^{d}}a_{\epsilon}(P_{H_{\epsilon},H}^{z}(U_{0},U_{1}),P_{H_{\epsilon},H}^{z}(V_{0},V_{1}))dz.
  \label{eq:aver_eq}
\end{equation}
We draft the proof of Equation (\ref{eq:aver_eq}) as below. 
Since
\begin{equation}
\begin{split}
  a_{\epsilon}\Big(\sum_{i=0,1}\sum_{x_{l}\in I_{z,H}}\chi_{K_{z,H}(x_{l})}\Big(U_{i}(x_{l})\eta_{z,i}(x_{l},\cdot)+\sum_{k}\partial_{k}U_{i}(x_{l})\eta_{z,i}^{(k)}(x_{l},\cdot)\Big),\\
\sum_{i=0,1}\sum_{x_{l}\in I_{z,H}}\chi_{K_{z,H}(x_{l})}\Big(V_{i}(x_{l})\eta_{z,i}(x_{l},\cdot)+\sum_{k}\partial_{k}V_{i}(x_{l})\eta_{z,i}^{(k)}(x_{l},\cdot)\Big)\Big)\\
=  \sum_{i=0,1}\sum_{j=0,1}\sum_{x_{l}\in I_{z,H}}a_{\epsilon}(\chi_{K_{z,H}(x_{l})}(U_{i}(x_{l})\eta_{z,i}(x_{l},\cdot)+\sum_{k}\partial_{k}U_{i}(x_{l})\eta_{z,i}^{(k)}(x_{l},\cdot)),\\
\chi_{K_{z,H}(x_{l})}(V_{j}(x_{l})\eta_{z,j}(x_{l},\cdot)+\sum_{k}\partial_{k}V_{j}(x_{l})\eta_{z,j}^{(k)}(x_{l},\cdot))).
\end{split}
\end{equation}
to show the equation (\ref{eq:aver_eq}), we only need to check 
\begin{equation} \label{eq:ind_aver_eq}
\begin{split}
&\int_{\Omega}\Big(\gamma_{H_{\epsilon},H}^{ji}U_{i}V_{j}\Big)dx \\
=&\cfrac{1}{H^{d}}\int_{[0,H]^{d}}\sum_{x_{l}\in I_{z,H}}a_{\epsilon}\Big(\chi_{K_{z,H}(x_{l})}U_{i}(x_{l})\eta_{z,i}(x_{l},\cdot),\chi_{K_{z,H}(x_{l})}V_{j}(x_{l})\eta_{z,j}(x_{l},\cdot)\Big)dz,\\
&\int_{\Omega}\Big(\beta_{k,H_{\epsilon},H}^{ji}\partial_{k}U_{i}V_{j}\Big)dx\\
=&\cfrac{1}{H^{d}}\int_{[0,H]^{d}}\sum_{x_{l}\in I_{z,H}}a_{\epsilon}\Big(\chi_{K_{z,H}(x_{l})}\partial_{k}U_{i}(x_{l})\eta_{z,i}^{(k)}(x_{l},\cdot),\chi_{K_{z,H}(x_{l})}V_{j}(x_{l})\eta_{z,j}(x_{l},\cdot)\Big)dz,\\
&\int_{\Omega}\Big(\alpha_{ks,H_{\epsilon},H}^{ij}\partial_{k}U_{i}\partial_{s}V_{j}\Big)dx\\
=&\cfrac{1}{H^{d}}\int_{[0,H]^{d}}\sum_{x_{l}\in I_{z,H}}a_{\epsilon}\Big(\chi_{K_{z,H}(x_{l})}\partial_{k}U_{i}(x_{l})\eta_{z,i}^{(k)}(x_{l},\cdot),\chi_{K_{z,H}(x_{l})}\partial_{s}V_{j}(x_{l})\eta_{z,j}^{(s)}(x_{l},\cdot)\Big)  dz.
\end{split}
\end{equation}

We will only show one of the equations. The proof of other equations follows a similar idea. Since $U_i(x_l),V_j(x_l)$ are independent of the integrating variable of $a_\epsilon(\cdot,\cdot)$, we obtain
\begin{equation}\label{eq:local_equation}
\begin{split}
&a_{\epsilon}\Big(\chi_{K_{z,H}(x_{l})}U_{i}(x_{l})\eta_{z}(x_{l},\cdot),\chi_{K_{z,H}(x_{l})}V_{j}(x_{l})\eta_{z,j}(x_{l},\cdot)\Big) \\
=&U_{i}(x_{l})V_{j}(x_{l})a_{\epsilon}\Big(\chi_{K_{z,H}(x_{l})}\eta_{z,i}(x_{l},\cdot),\chi_{K_{z,H}(x_{l})}\eta_{z,j}(x_{l},\cdot)\Big)\\
 =&U_{i}(x_{l})V_{j}(x_{l})\int_{K_{z,H}(x_{l})}\kappa(y)\nabla_{y}\eta_{z,i}(x_{l},y)\cdot\nabla_{y}\eta_{z,j}(x_{l},y)dy.
\end{split}
\end{equation}
Using Equations (\ref{eq:sum2int}) and (\ref{eq:local_equation}), we have
\begin{equation}
\begin{split}
&\cfrac{1}{H^{d}}\int_{[0,H]^{d}}\sum_{x_{l}\in I_{z,H}}a_{\epsilon}(\chi_{K_{z,H}(x_{l})}U_{i}(x_{l})\eta_{z,i}(x_{l},\cdot),\chi_{K_{z,H}(x_{l})}V_{j}(x_{l})\eta_{z,j}(x_{l},\cdot)\Big)dz\\
=&\cfrac{1}{H^{d}}\int_{[0,H]^{d}}\sum_{x_{l}\in I_{z,H}}U_{i}(x_{l})V_{i}(x_{l})\int_{K_{z,H}(x_{l})}\kappa(y)\nabla_{y}\eta_{z,i}(x,y)\cdot\nabla_{y}\eta_{z,j}(x,y)dydz\\
  =&\cfrac{1}{H^{d}}\int_{[0,H]^{d}}\sum_{x_{l}\in I_{z,H}}U_{i}(x_{l})V_{j}(x_{l})\gamma_{H_{\epsilon},H}^{ij}(x_{l})dz=\int_{\Omega}\gamma_{H_{\epsilon},H}^{ij}U_{i}V_{j}dx.
\end{split}
\end{equation}
Therefore, the first equation in (\ref{eq:ind_aver_eq}) is verified. 

After defining the effective operator, we can introduce the numerical homogenized solution $(U_{0,H_{\epsilon},H},U_{1,H_{\epsilon},H})$. We consider $(U_{0,H_{\epsilon},H},U_{1,H_{\epsilon},H})\in[H^{1}(\Omega)]^{2}$ to be the solution of the upscaled problem satisfying 
\begin{equation}
\tilde{a}_{H_{\epsilon},H}((U_{0,H_{\epsilon},H},U_{1,H_{\epsilon},H}),(V_{0},V_{1}))=\cfrac{1}{H^{d}}\int_{[0,H]^{d}}(f,P_{H_{\epsilon},H}^{z}(V_{0},V_{1}))\;\forall (V_{0},V_{1})\in[H^{1}(\Omega)]^{2} \label{eq:homo_eq}
\end{equation} 
\section{Analysis}\label{sec:analy}

In this section, we will present an analysis of the proposed upscaling method.
The proof is outlined as follows: We will first analyze the error estimate between the downscaling operators. The next step will be estimating the residual of the proposed homogenized equation. The very last step is applying this result and obtaining a convergence result of the homogenized solution.

Before, starting our analysis, for a given $\alpha>0$, we define a norm $\|\cdot\|_{a,1+\alpha}$ for the space $H^{1+\alpha}(\Omega)^2$ where
\[
\|(U_{0},U_{1})\|_{a,1+\alpha}^{2}=\|U_{0}\|_{H^{1+\alpha}}^{2}+\kappa_{max}\|U_{1}\|_{H^{1+\alpha}}^{2}.
\]

In the following lemma, we will show that the NLMC downscaling operator $P_{H_{\epsilon}}^{z}$ with a scale $H_{\epsilon} \ll H$ can be approximated by the multicontinuum homogenization downscaling operator $\tilde{P}_{H_{\epsilon}, H}^{z}$ if $(U_0, U_1)$ is smooth enough.
\begin{lemma}
\label{lem3}
Given $U_{0},U_{1}\in H^{1+\alpha}(\Omega)$ with $0<\alpha\leq 1$, the number of the oversampling layer If $2kH_{\epsilon}<H=O(kH_{\epsilon})$ and $C_0\geq k= O(\log(1/H_{\epsilon}))$ for some $C_0>0$, we have 
\[
\|P_{H_{\epsilon},H}^{z}(U_{0},U_{1})-P_{H_{\epsilon}}^{z}(U_{0},U_{1})\|_{a}\leq C\log(H^{-1})H^{\alpha}\|(U_{0},U_{1})\|_{a,1+\alpha}.
\]
Moreover, we have
\[
\|P_{H_{\epsilon},H}^{z}(U_{0},U_{1})-P_{H_{\epsilon}}^{z}(U_{0},U_{1})\|_{L^{2}}\leq C\log(H^{-1})H^{1+\alpha}\|(U_{0},U_{1})\|_{a,1+\alpha}.
\]
\end{lemma}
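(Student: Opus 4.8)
The plan is to interpolate between the two reconstructions through a single globally-defined function. Write $v_1 = P_{H_\epsilon}^z(U_0,U_1)$ and $v_2 = P_{H_\epsilon,H}^z(U_0,U_1)$, and introduce
\[
w:=\sum_{i=0,1}\sum_{x_m\in I_z}\bar U_i(x_m)\,\phi_{m,i}^z,\qquad \bar U_i(x_m):=U_i(x_l)+\nabla U_i(x_l)\cdot(x_m-x_l),
\]
where $x_l\in I_{z,H}$ is the centre of the $H$-cell containing $x_m$; I would then estimate $v_2-v_1=(v_2-w)+(w-v_1)$ term by term. By the constraints defining $\phi_{m,i}^z$, the function $w$ is the global constrained energy minimiser whose continuum-$i$ average on each fine cell equals the local linear value $\bar U_i(x_m)$; the same constraints show $v_1$ carries the exact averages $U_i(x_m)$, while the constraints defining $\eta_{z,i},\eta_{z,i}^{(k)}$ show that $v_2$ carries the averages $\bar U_i(x_m)$ as well. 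Thus $w-v_1$ is driven solely by the linearisation (Taylor) remainder, whereas $v_2-w$ is a pure localisation discrepancy between globally- and locally-computed minimisers that share the same averages.

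By linearity $w-v_1=P_{H_\epsilon}^z(U_0-\bar U_0,U_1-\bar U_1)$, so it suffices to bound the downscaling of the remainder. I would use the contrast-robust energy stability of the NLMC downscaling operator, which follows from its constrained energy-minimising construction in \cite{chung2018constraint}, in the form $\|P_{H_\epsilon}^z(W_0,W_1)\|_a\leq C\,(\|W_0\|_{H^1}^2+\kappa_{max}\|W_1\|_{H^1}^2)^{1/2}$. Applying the cell-wise fractional Bramble--Hilbert estimate $|U_i-\bar U_i|_{H^1(K)}\leq CH^{\alpha}|U_i|_{H^{1+\alpha}(K)}$ and summing over the $H$-cells yields $\|w-v_1\|_a\leq CH^{\alpha}\|(U_0,U_1)\|_{a,1+\alpha}$, the contrast weight $\kappa_{max}$ landing exactly where the definition of $\|\cdot\|_{a,1+\alpha}$ places it. For the $L^2$ bound I would instead invoke the sharper remainder estimate $\|U_i-\bar U_i\|_{L^2(K)}\leq CH^{1+\alpha}|U_i|_{H^{1+\alpha}(K)}$ together with the $L^2$ (Riesz) stability of $\{\phi_{m,i}^z\}$, or equivalently an Aubin--Nitsche duality argument; since $\kappa_{max}\geq1$, the unweighted $L^2$ norm on the left is dominated by the weighted norm on the right, giving $\|w-v_1\|_{L^2}\leq CH^{1+\alpha}\|(U_0,U_1)\|_{a,1+\alpha}$.

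For $v_2-w$ I would work cell by cell, using that $\|\cdot\|_a$ over the disjoint cells $K_{z,H}(x_l)$ decouples the sum. On each oversampling patch $K_{x_l,H}(x_l)^+$, the local field $U_i(x_l)\eta_{z,i}(x_l,\cdot)+\sum_k\partial_kU_i(x_l)\eta_{z,i}^{(k)}(x_l,\cdot)$ is, by uniqueness of the constrained minimiser, exactly the patch energy-minimiser carrying the single affine set of averages $\bar U_i(\cdot)$ centred at $x_l$. Comparing its restriction to $K_{z,H}(x_l)$ with $w$ restricted there involves two effects: the difference between the global basis $\phi_{m,i}^z$ and its patch-localised counterpart, controlled by the exponential decay of the CEM basis from \cite{chung2018constraint} so that $k=O(\log(1/H_\epsilon))$ makes this contribution a high power of $H_\epsilon$, negligible against $H^\alpha$; and the mismatch between the one patch-affine function about $x_l$ and the genuinely cell-wise affine $\bar U$ in the oversampling overlap, a second-order quantity of size $O(H^{1+\alpha})$ spread over $O(k)$ overlapping layers. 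Summing over all cells and accounting for the overlap multiplicity produces the $\log(H^{-1})\sim k$ prefactor and the bound $\|v_2-w\|_a\leq C\log(H^{-1})H^{\alpha}\|(U_0,U_1)\|_{a,1+\alpha}$, with the analogous $L^2$ bound one power of $H$ better.

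The hardest step is the estimate of $v_2-w$: one must simultaneously deploy the exponential-decay lemma, reconcile the per-patch linearisation with the global cell-wise linearisation in the oversampling overlaps, and carry out the overlap bookkeeping, verifying along the way that the scale relation $2kH_\epsilon<H=O(kH_\epsilon)$ with $k=O(\log(1/H_\epsilon))$ indeed renders $e^{-ck}$ a power of $H$ large enough to be absorbed while the overlap count contributes the single factor $\log(H^{-1})$. The remaining ingredient I have taken for granted, namely the contrast-robust energy and $L^2$ stability of $P_{H_\epsilon}^z$, is where the high-contrast structure must be tracked most carefully and is the other candidate for the technical crux.
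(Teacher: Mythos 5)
Your overall architecture (split into a linearisation error and a localisation error, control the latter by the exponential decay of the CEM basis with $k=O(\log(1/H_\epsilon))$ layers, control the former by a Bramble--Hilbert/Taylor remainder) is the same skeleton as the paper's proof, which localises first via a local constrained minimiser $P^{z,x_l,loc}_{H,H_\epsilon}$ carrying the exact averages and then linearises inside each patch. The difference in the order of the two operations is harmless. However, there are two concrete problems with your argument.

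First, the stability bound you invoke for the linearisation term, $\|P_{H_\epsilon}^z(W_0,W_1)\|_a\leq C\bigl(\|W_0\|_{H^1}^2+\kappa_{max}\|W_1\|_{H^1}^2\bigr)^{1/2}$, is not what the CEM/NLMC theory of \cite{chung2018constraint} provides and is not obviously true. The operator $P_{H_\epsilon}^z$ only sees the constraint data (cell averages / point values of $W_i$ on each continuum), and the available stability estimate for the constrained energy minimisation problem controls the energy by a \emph{weighted $L^2$ norm of the constraint data with a constant of order $H_\epsilon^{-1}$}; an $H^1$-stability bound would require exhibiting an $H^1_0$ competitor matching the averages on both continua with $\kappa$-weighted energy $\lesssim\|W_0\|_{H^1}^2+\kappa_{max}\|W_1\|_{H^1}^2$, which is nontrivial in the high-contrast setting and is nowhere established. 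This is exactly the step where the paper instead pays the factor $H_\epsilon^{-1}$ against the $L^2$ Taylor remainder $C(H+kH_\epsilon)^{1+\alpha}\|U_i\|_{H^{1+\alpha}}$, yielding $(H/H_\epsilon+k)(H+kH_\epsilon)^{\alpha}$ for the energy of the local error.

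Second, and as a consequence, you attribute the $\log(H^{-1})$ prefactor to the overlap multiplicity of the oversampling patches. That multiplicity is bounded by a fixed constant $D$ once $kH_\epsilon<H/2$ (as the paper notes), so it contributes no logarithm. The logarithm actually arises from the ratio $H/H_\epsilon+k=O(k)=O(\log(1/H_\epsilon))$ produced by the $H_\epsilon^{-1}$ stability constant just described, combined with the scale relation $H=O(kH_\epsilon)$. With your (unjustified) $H^1$-stability the linearisation term would come out as $CH^{\alpha}$ with no log at all, so your final bound only matches the lemma by accident of where you chose to park the logarithm. To repair the proof you should replace the asserted $H^1$-stability by the $L^2$-based stability of the local constrained problem and track the resulting $H/H_\epsilon$ factor; the $L^2$ estimate then follows, as in the paper, from a Poincar\'e inequality at scale $H_\epsilon$ applied to $e-\Pi^z e$ together with the explicit constraint values of $\Pi^z e$, rather than from an unspecified Riesz-basis or duality argument.
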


\begin{proof}
By the definition of $P_{H_{\epsilon},H}^{z}$, we have 
\begin{equation}
\begin{split}
&a_{\epsilon}(P_{H_{\epsilon}}^{z}(U_{0},U_{1})-P_{H_{\epsilon},H}^{z}(U_{0},U_{1}),v)\\
=&\sum_{x_{l}\in I_{z,H}}a_{\epsilon}\Big(\chi_{K_{z,H}(x_{l})}\Big(P_{H_{\epsilon}}^{z}(U_{0,H_{\epsilon}},U_{1,H_{\epsilon}})-\sum_{i=0,1}\Big(U_{i}(x_{l})\eta_{z}(x_{l},\cdot)+\sum_{k}\partial_{k}U_{i}(x_{l})\eta_{z,i}^{(k)}(x_{l},\cdot)\Big)\Big),v\Big)
\end{split}
\end{equation}
For each $K_{z,H}(x_l)$, we can define $P_{H,H_{\epsilon}}^{z,x_{l},loc}(U_{0},U_{1})\in H_{0}^{1}(K_{z,H}(x_l)^{+})$ such that
\begin{align*}
\int_{K_{z,H}(x_l)^{+}}\kappa \nabla P_{H,H_{\epsilon}}^{z,x_{l},loc}(U_{0},U_{1})\cdot \nabla v & =\sum_{x_{k}\in I_{z}\cap K_{z,H_\epsilon}(x_l)^{+}}\sum_{j=0,1}c_{k,j}(\psi_{k,j}^{z},v)\\
\int_{K_{z,H}(x_l)^{+}}P_{H,H_{\epsilon}}^{z,x_{l},loc}(U_{0},U_{1})\psi_{k,j}^{z} & =\delta_{ij}\int_{K_{z,H}(x_l)^{+}}U_i\psi_{k,j}^{z}\;\forall x_k\in I_{z}\cap K_{z,H}(x_l)^{+}.
\end{align*}
In \cite{chung2018constraint}, the authors show that $\phi_{l,i}^{z}$ have exponential decaying property. Using the exponential decaying property, we have
\begin{equation}
\begin{split}
&\|P_{H_{\epsilon}}^{z}(U_{0},U_{1})-P_{H,H_{\epsilon}}^{z,x_{l},loc}(U_{0},U_{1})\|_{a(K_{z,H}(x_{l}))}^{2}\\ \leq & C(1+E)^{-k}\|P_{H_{\epsilon}}^{z}(U_{0},U_{1})\|_{a(K_{z}^{k}(x_{l})^+)}^{2}.
\end{split}
\end{equation}
Thus, we can obtain
\begin{equation}
\begin{split}
|a(P_{H_{\epsilon}}^{z}(U_{0},U_{1})-P_{H_{\epsilon},H}^{z}(U_{0},U_{1}),v)|\leq & \sum_{x_{l}\in I_{z,H}}|a_{\epsilon}(\chi_{K_{z,H}(x_{l})}\Big(P_{H,H_{\epsilon}}^{z,x_{l},loc}(U_{0},U_{1})\\
&-\sum_{i=0,1}\Big(U_{i}(x_{l})\eta_{z}(x_{l},\cdot)+\sum_{k}\partial_{k}U_{i}(x_{l})\eta_{z,i}^{(k)}(x_{l},\cdot)\Big)\Big),v)|\\
 & +Ck^{\frac{d}{2}}(1+E)^{-\frac{k}{2}}\|P_{H_{\epsilon}}^{z}(U_{0},U_{1})\|_{a}\|v\|_{a}.
\end{split}
\end{equation}

We next define the local error function $e_{l,z}$ as
\begin{equation*}
\begin{split}
e_{l,z} &:=P_{H,H_{\epsilon}}^{z,x_{l},loc}(U_{0},U_{1}) -\sum_{i=0,1}\Big(U_{i}(x_{l})\eta_{z}(x_{l},\cdot)+\sum_{k}\partial_{k}U_{i}(x_{l})\eta_{z,i}^{(k)}(x_{l},\cdot) \Big)\in V_{0}(K_{z,H}^{+}(x_{l})).
\end{split}
\end{equation*}
To estimate the bound of the error function, using the property that both of the terms, $\;$
$P_{H,H_{\epsilon}}^{z,x_{l},loc}(U_{0},U_{1})$ and $\sum_{i=0,1}\Big(U_{i}(x_{l})\eta_{z}(x_{l},\cdot)+\sum_{k}\partial_{k}U_{i}(x_{l})\eta_{z,i}^{(k)}(x_{l},\cdot) \Big)$ satisfying the energy minimization problem with specified constraints, the error function satisfies the following system of equations:
\begin{equation*}
\begin{split}
\int_{K_{z,H}^{+}(x)}\kappa_{\epsilon}(y)\nabla_{y}e_{l,z}\nabla_{y}v  &=\sum_{x_{j}\in I_{x,H_{\epsilon}}\cap K_{z,H}^{+}(x_{l})}\sum_{k=0,1}\mu_{j,k}^{e}(\psi_{k}(x_{j},y),v)\\
\int_{K_{z,H}^{+}(x)} e_{l,z}(y)\psi_{m}(x_{j},y)dy  &=
\int_{K_{z,H}^{+}(x_{l})}\Big(U_{i}(x_j)-U_{i}(x_{l})-\sum_{k}(x_j^{(k)}-x_l^{(k)})\partial_{k}U_{i}(x_{l})\Big)\psi_{m}(x_{j},y)dy,\\
&\hspace{6cm}\forall x_{j}\in I_{x,H_{\epsilon}}\cap K_{z,H}^{+}(x_{l}).
\end{split}
\end{equation*}
Using the regularity of $U_1,U_2$, we obtain that
\[
\|U_{i}(x_j)-U_{i}(x_{l})-\sum_{k}(x_j^{(k)}-x_l^{(k)})\partial_{k}U_{i}(x_{l})\|_{L^{2}(K_{z,H}^{+}(x_{l}))}\leq C(H+kH_{\epsilon})^{1+\alpha}\|U_{i}\|_{H^{1+\alpha}(K_{z,H}^{+}(x_{l}))}.
\]
By the stability of the local problem shown in \cite{chung2018constraint}, we have 
\[
\|e_{l,z}\|_{a(K_{z,H}^{+}(x_{l}))}^{2}\leq C(\cfrac{H}{H_{\epsilon}}+k)^{2}(H+kH_{\epsilon})^{2\alpha}\Big(\|U_{0}\|_{H^{1+\alpha}(K_{z,H}^{+}(x_{l}))}^{2}+\kappa_{\max}\|U_{1}\|_{H^{1+\alpha}(K_{z,H}^{+}(x_{l}))}^{2}\Big)
\]
and 
\[\|e_{l,z}\|_{a}^{2}
\leq C(\cfrac{H}{H_{\epsilon}}+k)^{2}(H+kH_{\epsilon})^{2\alpha}\sum_{x\in I_{z,H}}\Big(\|U_{0}\|_{H^{1+\alpha}(K_{z,H}^{+}(x_{l}))}^{2}+\kappa_{\max}\|U_{1}\|_{H^{1+\alpha}(K_{z,H}^{+}(x_{l}))}^{2}\Big)
\]
If $kH_{\epsilon}<H/2$, we have $D_{H_{\epsilon},H}=\max_{y,z}\{\sum_{x_{l}\in I_{z,H}}\chi_{K_{z,H}^{+}(x_{l})}(y)\}\leq D<\infty$
and 
\[
\sum_{x\in I_{z,H}}\|U_{i}\|_{H^{1+\alpha}(K_{z,H}^{+}(x_{l}))}^{2}\leq D\sum_{x\in I_{z,H}}\|U_{i}\|_{H^{1+\alpha}(K_{z,H}(x_{l}))}^{2}=D\|U_{i}\|_{H^{1+\alpha}(\Omega)}^{2}
\]
Therefore, if $H>2kH_{\epsilon}$ and $k=C_0\log(1/H_{\epsilon})$ with
large enough $C_{0}>0$, we have 
\[
\|e_{l,z}\|_{a}^{2}\leq CD\log(1/H_{\epsilon})^{2}H^{2\alpha}\Big(\|U_{0}\|_{H^{1+\alpha}(\Omega)}^{2}+\kappa_{\max}\|U_{1}\|_{H^{1+\alpha}(\Omega)}^{2}\Big).
\]

We will then show the $L^2$ estimate of error function $e_{l,z}$. Using the Poincare's inequality for the projection operator, we have
\[
\|e_{l,z}-\Pi^z e_{l,z}\|_{L^2(K_{z,H_{\epsilon}}(x))}^{2}\leq CH_{\epsilon}^{2}\|e_{l,z}-\Pi^z e_{l,z}\|_{a(K_{z,H_{\epsilon}}(x))}^{2}
\]
for all $x\in I_{z,H_{\epsilon}}\cap K_{z,H}^{+}(x_{l})$ where $\Pi^z= \sum_{i=1,2}\Pi_{i,H_\epsilon}^z$. 
Using the constraints of the local problem, we have 
\[
\Pi^z (e_{l,z}) = \sum_i\Big(U_{i}(x_j)-U_{i}(x_{l})-\sum_{k}(x_j^{(k)}-x_{l}^{(k)})\partial_{k}U_{i}(x_{l})\Big)
\] and
\[
\|\Pi^z (e_{l,z})\|^2_{L^2(K_{z,H_{\epsilon}}(x))} \leq \|\Big(U_{i}(y)-U_{i}(x_{l})-\sum_{k}(y^{(k)}-x_{l}^{(k)})\partial_{k}U_{i}(x_{l})\Big)\|^2_{L^2(K_{z,H_{\epsilon}}(x))}.
\]
Therefore, the $L^2$ estiamte is given by
\begin{align*}
\|e_{l,z}\|_{L^2(K_{z,H}(x_{l}))}^{2} & \leq C(H+kH_{\epsilon})^{2+2\alpha}\Big(\|U_{0}\|_{H^{1+\alpha}(\Omega)}^{2}+\kappa_{\max}\|U_{1}\|_{H^{1+\alpha}(\Omega)}^{2}\Big)+H_{\epsilon}^{2}\|e_{l,z}\|_{a}^{2}\\
 & \leq CH{}^{2+2\alpha}\Big(\|U_{0}\|_{H^{1+\alpha}(\Omega)}^{2}+\kappa_{\max}\|U_{1}\|_{H^{1+\alpha}(\Omega)}^{2}\Big).
\end{align*}
\end{proof}

We consider an operator $A_{H_{\epsilon},H}^{-1,*}:L^{2}\rightarrow[H^{1}]^{2}$
as
\[
\tilde{a}_{H_{\epsilon},H}(A_{H_{\epsilon},H,0}^{-1,*}(f),(V_{0},V_{1}))=\cfrac{1}{H^{d}}\int_{[0,H]^{d}}(f,P_{H_{\epsilon},H}^{z}(V_{0},V_{1}))\;\forall(V_{0},V_{1})\in[H^{1}]^{2}.
\]
In this paper, we assume the smoothness of the upscaling quantities. We assume for all $\epsilon>0$  there exist a smooth function $u_{0,\epsilon},u_{1,\epsilon}\in H^{1+\alpha}(\Omega)$ such that $\Pi_{0,H_{\epsilon}}^{z}(u_{\epsilon})=\Pi_{0,H_{\epsilon}}^{z}(u_{0,\epsilon})$ and $\Pi_{1,H_{\epsilon}}^{z}(u_{\epsilon})=\Pi_{1,H_{\epsilon}}^{z}(u_{1,\epsilon})$ with $\|u_{i,\epsilon}\|_{H^{1+\alpha}}<C$ with an uniform constant $C$.
In order to obtain the numerical homogenized solution satisfying the smoothness assumption, we assume the inverse of the effective operator has the following smoothing property.
\begin{assumption} \label{ass:regularity} We assume $A_{H_{\epsilon},H}^{-1,*}$ is a bounded operator mapping
from $L^{2}$ to $[H^{1+\alpha}]^{2}$ such that 
\[
\|A_{H_{\epsilon},H}^{-1,*}(f)\|_{a,1+\alpha}\leq C_\zeta\|f\|_{L^{2}}.
\]
\end{assumption}
In the following lemma, we will give an estimate of the residual of the exact solution for the homogenized equations (\ref{eq:homo_eq}) in the following sense.
\begin{lemma}
Let $f\in L^{2}(\Omega)$ and $u_{\epsilon}$ be the solution of the
PDE. If If $2kH_{\epsilon}<H=O(kH_{\epsilon})$ and $C_0\geq k= O(\log(1/H_{\epsilon}))$ for some $C_0>0$, we have
\[
\tilde{a}_{H_{\epsilon},H}((U_{0,H_{\epsilon},H}-\Pi_{0,H_{\epsilon}}^{z}(u_{\epsilon}),U_{1,H_{\epsilon},H}-\Pi_{1,H_{\epsilon}}^{z}(u_{\epsilon})),(V_{0},V_{1}))\leq C\log(1/H_{\epsilon})H^{\alpha}\|f\|_{L^{2}}\|(V_{0},V_{1})\|_{a,1+\alpha}.
\]
\end{lemma}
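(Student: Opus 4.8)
The plan is to expand the residual using the definitions of the two ingredient operators and then to subtract the nonlocal multicontinuum (NLMC) Galerkin identity, so that the leading-order contribution cancels and only the discrepancy between the two downscaling operators survives, which Lemma \ref{lem3} controls. First, since $\tilde{a}_{H_{\epsilon},H}$ acts on the gradients of its arguments, the piecewise-constant quantities $\Pi_{i,H_{\epsilon}}^{z}(u_{\epsilon})$ sitting in the first slot must be read through the smoothness assumption, i.e.\ replaced by the smooth representatives $u_{0,\epsilon},u_{1,\epsilon}\in H^{1+\alpha}(\Omega)$ satisfying $\Pi_{i,H_{\epsilon}}^{z}(u_{i,\epsilon})=\Pi_{i,H_{\epsilon}}^{z}(u_{\epsilon})$. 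Writing $w^{z}:=P_{H_{\epsilon},H}^{z}(V_{0},V_{1})$ and $\phi^{z}:=P_{H_{\epsilon},H}^{z}(u_{0,\epsilon},u_{1,\epsilon})$, I would use the definition of the homogenized solution (\ref{eq:homo_eq}) together with the averaging identity (\ref{eq:aver_eq}) to rewrite the left-hand side (denote it $R$) as
\[
R=\cfrac{1}{H^{d}}\int_{[0,H]^{d}}\Big[(f,w^{z})-a_{\epsilon}(\phi^{z},w^{z})\Big]\,dz .
\]

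Next I would introduce the NLMC counterparts $\tilde{w}^{z}:=P_{H_{\epsilon}}^{z}(V_{0},V_{1})$ and $\tilde{\phi}^{z}:=P_{H_{\epsilon}}^{z}(u_{0,\epsilon},u_{1,\epsilon})$. Because the auxiliary-space coefficients (the local averages) of $u_{i,\epsilon}$ agree with those of $u_{\epsilon}$, one has $\tilde{\phi}^{z}=u_{glo,H_{\epsilon}}^{z}$, and since $\tilde{w}^{z}\in V_{glo,H_{\epsilon}}^{z}$, the Galerkin property of Theorem \ref{thm:NLMC} (equivalently (\ref{eq:NLMC_residual})) gives $(f,\tilde{w}^{z})-a_{\epsilon}(\tilde{\phi}^{z},\tilde{w}^{z})=0$ for every $z$. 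Subtracting this vanishing quantity under the integral and splitting the bilinear term, I obtain
\[
R=\cfrac{1}{H^{d}}\int_{[0,H]^{d}}\Big[(f,w^{z}-\tilde{w}^{z})-a_{\epsilon}(\phi^{z}-\tilde{\phi}^{z},w^{z})-a_{\epsilon}(\tilde{\phi}^{z},w^{z}-\tilde{w}^{z})\Big]\,dz .
\]

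The three terms are then estimated by Cauchy--Schwarz together with Lemma \ref{lem3}. For the first term I would pair $\|f\|_{L^{2}}$ with the $L^{2}$ bound $\|w^{z}-\tilde{w}^{z}\|_{L^{2}}\leq C\log(H^{-1})H^{1+\alpha}\|(V_{0},V_{1})\|_{a,1+\alpha}$. For the third term I would use the energy bound $\|w^{z}-\tilde{w}^{z}\|_{a}\leq C\log(H^{-1})H^{\alpha}\|(V_{0},V_{1})\|_{a,1+\alpha}$ with the stability estimate $\|\tilde{\phi}^{z}\|_{a}=\|u_{glo,H_{\epsilon}}^{z}\|_{a}\leq C\|f\|_{L^{2}}$, which follows from testing the NLMC equation against its own solution and invoking Poincar\'e's inequality. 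For the second term I would bound $\|\phi^{z}-\tilde{\phi}^{z}\|_{a}\leq C\log(H^{-1})H^{\alpha}\|(u_{0,\epsilon},u_{1,\epsilon})\|_{a,1+\alpha}$ by Lemma \ref{lem3} and combine it with a downscaling stability bound $\|w^{z}\|_{a}\leq C\|(V_{0},V_{1})\|_{a,1+\alpha}$; here $\|(u_{0,\epsilon},u_{1,\epsilon})\|_{a,1+\alpha}\leq C\|f\|_{L^{2}}$ is supplied by the smoothness/regularity assumption (using linearity in $f$). Summing the three contributions and using $H=O(kH_{\epsilon})$, $k=O(\log(1/H_{\epsilon}))$ to identify $\log(H^{-1})$ with $\log(1/H_{\epsilon})$ yields the asserted bound $C\log(1/H_{\epsilon})H^{\alpha}\|f\|_{L^{2}}\|(V_{0},V_{1})\|_{a,1+\alpha}$, the lower-order $H^{1+\alpha}$ contribution being absorbed.

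The main obstacle I anticipate is not the algebraic cancellation but controlling every stability constant uniformly in the contrast $\zeta_{\epsilon}$ and uniformly in the shift $z$. In particular, the energy norm $\|w^{z}\|_{a}$ of the downscaled test function inevitably picks up factors of order $\kappa_{max}$ in the high-permeability continuum, and it is precisely the $\kappa_{max}$-weighting built into $\|\cdot\|_{a,1+\alpha}$ that must absorb them; establishing $\|P_{H_{\epsilon},H}^{z}(V_{0},V_{1})\|_{a}\leq C\|(V_{0},V_{1})\|_{a,1+\alpha}$ with a contrast-independent $C$, and confirming the identification $\tilde{\phi}^{z}=u_{glo,H_{\epsilon}}^{z}$ through the matching of averages, are the delicate points. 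A secondary subtlety is that Lemma \ref{lem3} and the averaging formula (\ref{eq:sum2int}) have to be applied uniformly for all $z\in[0,H]^{d}$, so the constants appearing in Lemma \ref{lem3} must be taken independent of $z$.
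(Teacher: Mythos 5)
Your proposal is correct and follows essentially the same route as the paper: both use the homogenized equation (\ref{eq:homo_eq}) together with the averaging identity (\ref{eq:aver_eq}), subtract the NLMC Galerkin identity via $P_{H_{\epsilon}}^{z}(\Pi_{0,H_{\epsilon}}^{z}(u_{\epsilon}),\Pi_{1,H_{\epsilon}}^{z}(u_{\epsilon}))=u_{glo,H_{\epsilon}}^{z}$, and reduce everything to the three discrepancy terms controlled by the $L^{2}$ and energy estimates of Lemma \ref{lem3}. The only difference is organizational (you cancel the Galerkin identity in one step where the paper first splits the first argument into two differences), and the delicate points you flag -- contrast-uniform stability of $\|P_{H_{\epsilon},H}^{z}(V_{0},V_{1})\|_{a}$ and the identification of the downscaled smooth representatives with $u_{glo,H_{\epsilon}}^{z}$ -- are exactly the steps the paper also leaves implicit.
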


\begin{proof}
We have 
\begin{align*}
 & \cfrac{1}{H^{d}}\int_{[0,H]^{d}}a_{\epsilon}(P_{H_{\epsilon},H}^{z}(U_{0,H_{\epsilon},H}-\Pi_{0,H_{\epsilon}}^{z}(u_{\epsilon}),U_{1,H_{\epsilon},H}-\Pi_{1,H_{\epsilon}}^{z}(u_{\epsilon})),P_{H_{\epsilon},H}^{z}(V_{0},V_{1}))\\
= & \cfrac{1}{H^{d}}\int_{[0,H]^{d}}a_{\epsilon}(P_{H_{\epsilon},H}^{z}(U_{0,H_{\epsilon},H},U_{1,H_{\epsilon},H})-P_{H_{\epsilon}}^{z}(\Pi_{0,H_{\epsilon}}^{z}(u_{\epsilon}),\Pi_{1,H_{\epsilon}}^{z}(u_{\epsilon})),P_{H_{\epsilon},H}^{z}(V_{0},V_{1}))\\
 & +\cfrac{1}{H^{d}}\int_{[0,H]^{d}}a_{\epsilon}(P_{H_{\epsilon}}^{z}(\Pi_{0,H_{\epsilon}}^{z}(u_{\epsilon}),\Pi_{1,H_{\epsilon}}^{z}(u_{\epsilon}))-P_{H_{\epsilon},H}^{z}(\Pi_{0,H_{\epsilon}}^{z}(u_{\epsilon}),\Pi_{1,H_{\epsilon}}^{z}(u_{\epsilon})),P_{H_{\epsilon},H}^{z}(V_{0},V_{1}))
\end{align*}
By the definitions of $P_{H_{\epsilon},H}^{z}(U_{0,H_{\epsilon},H},U_{1,H_{\epsilon},H}),P_{H_{\epsilon}}^{z}(\Pi_{0,H_{\epsilon}}^{z}(u_{\epsilon}),\Pi_{1,H_{\epsilon}}^{z}(u_{\epsilon}))$
and Lemma \ref{lem3}, we have
\begin{equation}
\begin{split}
  &\cfrac{1}{H^{d}}\int_{[0,H]^{d}}a_{\epsilon}(P_{H_{\epsilon},H}^{z}(U_{0,H_{\epsilon},H},U_{1,H_{\epsilon},H})-P_{H_{\epsilon}}^{z}(\Pi_{0,H_{\epsilon}}^{z}(u_{\epsilon}),\Pi_{1,H_{\epsilon}}^{z}(u_{\epsilon})),P_{H_{\epsilon},H}^{z}(V_{0},V_{1}))\\
=&  \cfrac{1}{H^{d}}\int_{[0,H]^{d}}(f,P_{H_{\epsilon},H}^{z}(V_{0},V_{1})-P_{H_{\epsilon}}^{z}(V_{0},V_{1}))\\
&-\int_{[0,H]^{d}}a_{\epsilon}(P_{H_{\epsilon}}^{z}(\Pi_{0,H_{\epsilon}}^{z}(u_{\epsilon}),\Pi_{1,H_{\epsilon}}^{z}(u_{\epsilon}),P_{H_{\epsilon},H}^{z}(V_{0},V_{1})-P_{H_{\epsilon}}^{z}(V_{0},V_{1}))\\
\leq & CH^{1+\alpha}\|f\|_{L^{2}}\|(V_{0},V_{1})\|_{a,1+\alpha}+C\log(1/H_{\epsilon})H^{\alpha}\|P_{H_{\epsilon}}^{z}(\Pi_{0,H_{\epsilon}}^{z}(u_{\epsilon}),\Pi_{1,H_{\epsilon}}^{z}(u_{\epsilon})\|_{a}\|(V_{0},V_{1})\|_{a,1+\alpha}
\end{split}
\end{equation}
and
\begin{align*}
 & \cfrac{1}{H^{d}}\int_{[0,H]^{d}}a_{\epsilon}(\tilde{P}_{H_{\epsilon}}^{z}(\Pi_{0,H_{\epsilon}}^{z}(u_{\epsilon}),\Pi_{1,H_{\epsilon}}^{z}(u_{\epsilon}))-P_{H_{\epsilon},H}^{z}(\Pi_{0,H_{\epsilon}}^{z}(u_{\epsilon}),\Pi_{1,H_{\epsilon}}^{z}(u_{\epsilon})),P_{H_{\epsilon},H}^{z}(V_{0},V_{1}))\\
\leq & CH^{\alpha}\|(u_{0,\epsilon},u_{1,\epsilon})\|_{a,1+\alpha}\|(V_{0},V_{1})\|_{a}.
\end{align*}
Therefore, we have 
\[
\tilde{a}_{H_{\epsilon},H}((U_{0,H_{\epsilon},H}-\Pi_{0,H_{\epsilon}}^{z}(u_{\epsilon}),U_{1,H_{\epsilon},H}-\Pi_{1,H_{\epsilon}}^{z}(u_{\epsilon})),(V_{0},V_{1}))\leq CH^{\alpha}\|f\|_{L^{2}}\|(V_{0},V_{1})\|_{a,1+\alpha}.
\]

\end{proof}

Note that from this theorem, it follows that if $U_i$'s are smooth
(one of main assumptions in multicontinuum homogenization), we have
\[
\tilde{a}_{H_{\epsilon},H}((U_{0,H_{\epsilon},H}-\Pi_{0,H_{\epsilon}}^{z}(u_{\epsilon}),U_{1,H_{\epsilon},H}-\Pi_{1,H_{\epsilon}}^{z}(u_{\epsilon})),(U_{0},U_{1}))\leq C\log(\cfrac{1}{H_{\epsilon}})H^{\alpha}\|f\|_{L^{2}}\|(U_{0},U_{1})\|_{a,1+\alpha}
\]
and thus small. Next, we show that if one assumes that the inverse operator
is bounded, then we can get an estimate for the solution.

\begin{theorem}
Let $f\in L^{2}(\Omega)$ and $u_{\epsilon}$ be the solution of the
PDE. If $2kH_{\epsilon}<H=O(kH_{\epsilon})$ and $C_0\geq k= O(\log(1/H_{\epsilon}))$ for some $C_0>0$, we have 
\[
\|u_{\epsilon}-\cfrac{1}{H^{d}}\int_{[0,H]^{d}}P_{H_{\epsilon},H}^{z}(U_{0,H_{\epsilon}},U_{1,H_{\epsilon}})dz\|_{L^{2}}\leq C\log(1/H_{\epsilon})H^{\alpha}\|f\|_{L^{2}}.
\]
\end{theorem}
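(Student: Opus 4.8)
The plan is to convert the bilinear-form residual estimate from the preceding lemma into an $L^2$ error bound by invoking the well-posedness/smoothing Assumption~\ref{ass:regularity} together with Lemma~\ref{lem3} and Theorem~\ref{thm:NLMC}. The natural strategy is a duality (Aubin--Nitsche type) argument, since we want an $L^2$ estimate but our control is on the effective bilinear form $\tilde{a}_{H_{\epsilon},H}$. First I would set the error in upscaled variables to be $(E_0,E_1):=(U_{0,H_{\epsilon},H}-\Pi_{0,H_{\epsilon}}^z(u_\epsilon),\,U_{1,H_{\epsilon},H}-\Pi_{1,H_{\epsilon}}^z(u_\epsilon))$, and note that the target quantity $u_\epsilon-\frac{1}{H^d}\int_{[0,H]^d}P_{H_{\epsilon},H}^z(U_{0,H_{\epsilon}},U_{1,H_{\epsilon}})\,dz$ must be split into two contributions: the downscaling of the averaged error $(E_0,E_1)$, and the intrinsic NLMC approximation error $u_\epsilon-\frac{1}{H^d}\int P_{H_{\epsilon},H}^z(\Pi_0^z(u_\epsilon),\Pi_1^z(u_\epsilon))\,dz$.

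\medskip

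\noindent The second contribution is the easy one: by Theorem~\ref{thm:NLMC} we have $\|P_{H_{\epsilon}}^z(\Pi_0^z(u_\epsilon),\Pi_1^z(u_\epsilon))-u_\epsilon\|_a\le CH_\epsilon$ (hence an $O(H_\epsilon)$, thus $O(H^{1+\alpha})$-admissible, $L^2$ bound via Poincar\'e on the constrained space), and by Lemma~\ref{lem3} the difference between $P_{H_{\epsilon},H}^z$ and $P_{H_{\epsilon}}^z$ applied to $(\Pi_0^z(u_\epsilon),\Pi_1^z(u_\epsilon))$ is $O(\log(H^{-1})H^{1+\alpha})$ in $L^2$, using the smoothness of $u_{i,\epsilon}$ guaranteed by the standing smoothness assumption. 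These two pieces already give the stated rate. The real work is bounding the downscaled averaged error. For this I would estimate $\|E_0,E_1\|$ in an appropriate norm by feeding the residual bound from the previous lemma into the assumed invertibility of $A_{H_{\epsilon},H}^{-1,*}$. Concretely, choosing the test pair $(V_0,V_1)$ to be the solution of the adjoint effective problem with data equal to the $L^2$-representative of the error, Assumption~\ref{ass:regularity} gives $\|(V_0,V_1)\|_{a,1+\alpha}\le C_\zeta\|\,\text{error}\,\|_{L^2}$, and the residual lemma then converts $\tilde{a}_{H_{\epsilon},H}((E_0,E_1),(V_0,V_1))$ into the $L^2$ norm of the error times $C\log(1/H_\epsilon)H^\alpha\|f\|_{L^2}$, whence the $L^2$ bound follows after dividing out.

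\medskip

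\noindent \textbf{The main obstacle} I expect is relating the $L^2$ norm of the \emph{downscaled} averaged quantity $\frac{1}{H^d}\int_{[0,H]^d}P_{H_{\epsilon},H}^z(E_0,E_1)\,dz$ back to a norm of $(E_0,E_1)$ that the duality pairing controls. The downscaling operator mixes the macroscopic coefficients with the oscillatory local basis functions $\eta_{z,i},\eta_{z,i}^{(k)}$, so I would need a stability estimate showing $\|\frac{1}{H^d}\int P_{H_{\epsilon},H}^z(E_0,E_1)\,dz\|_{L^2}$ is comparable to $\|(E_0,E_1)\|$ in the $L^2$-type component of the $\|\cdot\|_{a,1+\alpha}$ norm, possibly losing the high-contrast factor $\kappa_{\max}$ that is absorbed into the weighted norm. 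The averaging identity~(\ref{eq:aver_eq}) and the relation~(\ref{eq:sum2int}) connecting $\int_{[0,H]^d}\sum_{x_l\in I_{z,H}}$ to $\int_\Omega$ are the tools I would lean on here, together with the Poincar\'e-type control on the constrained local spaces used in Lemma~\ref{lem3}. Finally, I would assemble the triangle inequality over the three pieces and collect the dominant $C\log(1/H_\epsilon)H^\alpha\|f\|_{L^2}$ term, noting that the hypotheses $2kH_\epsilon<H=O(kH_\epsilon)$ and $k=O(\log(1/H_\epsilon))$ ensure the logarithmic factors and oversampling-decay remainders are of the claimed order.
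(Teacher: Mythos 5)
Your proposal is correct and follows essentially the same route as the paper: the same three-term triangle inequality (NLMC error via Theorem~\ref{thm:NLMC}, the $P_{H_{\epsilon},H}^{z}$ versus $P_{H_{\epsilon}}^{z}$ discrepancy via Lemma~\ref{lem3}, and the downscaled averaged error), with the last term handled by exactly the duality argument you describe, testing with $A_{H_{\epsilon},H}^{-1,*}$ applied to the downscaled error and invoking Assumption~\ref{ass:regularity} and the residual lemma. The ``main obstacle'' you flag does not actually arise, because the adjoint data is taken to be the downscaled averaged error itself, so by symmetry of $\tilde{a}_{H_{\epsilon},H}$ the pairing produces the squared $L^{2}$ norm of precisely the quantity appearing in the theorem, and no comparison back to a norm of $(E_{0},E_{1})$ is needed.
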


\begin{proof}

We consider $(V_{0},V_{1})=A_{H_{\epsilon},H}^{-1,*}(\cfrac{1}{H^{d}}\int_{[0,H]^{d}}P_{H_{\epsilon},H}^{z}(U_{0,H_{\epsilon},H}-\Pi_{0,H_{\epsilon}}^{z}(u_{\epsilon}),U_{1,H_{\epsilon},H}-\Pi_{1,H_{\epsilon}}^{z}(u_{\epsilon})))$
and obtain 
\begin{equation}
\begin{split}
&\|\cfrac{1}{H^{d}}\int_{[0,H]^{d}}P_{H_{\epsilon},H}^{z}(U_{0,H_{\epsilon},H}-\Pi_{0,H_{\epsilon}}^{z}(u_{\epsilon}),U_{1,H_{\epsilon},H}-\Pi_{1,H_{\epsilon}}^{z}(u_{\epsilon}))\|_{L^{2}}^{2}  \\
=&\tilde{a}_{H_{\epsilon},H}((U_{0,H_{\epsilon},H}-\Pi_{0,H_{\epsilon}}^{z}(u_{\epsilon}),U_{1,H_{\epsilon},H}-\Pi_{1,H_{\epsilon}}^{z}(u_{\epsilon})),(V_{0},V_{1}))\\
  \leq & C\log(1/H_{\epsilon})H^{\alpha}\|f\|_{L^{2}}\|(V_{0},V_{1})\|_{a,1+\alpha}\\
  \leq &C\log(1/H_{\epsilon})H^{\alpha}\|f\|_{L^{2}}\|\cfrac{1}{H^{d}}\int_{[0,H]^{d}}P_{H_{\epsilon},H}^{z}(U_{0,H_{\epsilon},H}-\Pi_{0,H_{\epsilon}}^{z}(u_{\epsilon}),U_{1,H_{\epsilon},H}-\Pi_{1,H_{\epsilon}}^{z}(u_{\epsilon}))\|_{L^{2}}.
\end{split}
\end{equation}

\begin{equation}
\begin{split}
&\|u_{\epsilon}-\cfrac{1}{H^{d}}\int_{[0,H]^{d}}P_{H_{\epsilon},H}^{z}(U_{0,H_{\epsilon},H},U_{1,H_{\epsilon},H})\|_{L^{2}} \\
\leq&\|u_{\epsilon}-\cfrac{1}{H^{d}}\int_{[0,H]^{d}}P_{H_{\epsilon}}^{z}(\Pi_{0,H_{\epsilon}}^{z}(u_{\epsilon}),\Pi_{1,H_{\epsilon}}^{z}(u_{\epsilon}))\|_{L^{2}}\\
&+\|\cfrac{1}{H^{d}}\int_{[0,H]^{d}}(P_{H_{\epsilon},H}^{z}-P_{H_{\epsilon}}^{z})(\Pi_{0,H_{\epsilon}}^{z}(u_{\epsilon}),\Pi_{1,H_{\epsilon}}^{z}(u_{\epsilon}))\|_{L^{2}}\\
  &+\|\cfrac{1}{H^{d}}\int_{[0,H]^{d}}P_{H_{\epsilon},H}^{z}(U_{0,H_{\epsilon},H}-\Pi_{0,H_{\epsilon}}^{z}(u_{\epsilon}),U_{0,H_{\epsilon},H}-\Pi_{0,H_{\epsilon}}^{z}(u_{\epsilon}))\|_{L^{2}}\\
\leq&  (CH_{\epsilon}^{2}+CH^{1+\alpha}+C\log(1/H_{\epsilon})H^{\alpha})\|f\|_{L^{2}}\leq CH^{\alpha}\|f\|_{L^{2}}.
\end{split}
\end{equation}

\end{proof}

We would like to make two important remarks.
\begin{itemize}

\item First, we note that the proof can be used for both high contrast 
and  bounded contrast cases. High contrast ($\zeta_\epsilon$) is assumed to be
with respect to numerical parameters, such as mesh size and RVE size. In high contrast cases, the resulting homogenized operator can contain high contrast
and, one needs to be careful with the constant $C_\zeta$ appearing in Assumption 1. In general, in high contrast case, our results show that the residual in
Lemma 4, is small. 

\item Secondly, we note that one can choose $H$ to be computational 
coarse mesh size that is
used to compute effective properties and effective properties can be piecewise 
constants on the coarse mesh.



\end{itemize}

\section{Numerical Approaches}
In this subsection, we will propose some numerical simplifications for the calculation of the effective operator.
In practice, instead of using (\ref{eq:upscale_operator}), we can consider the effective bilinear
form defined as
\begin{align*}
&\tilde{a}_{H_{\epsilon},H}^{z}((U_{0},U_{1}),(V_{0},V_{1}))\\
:=&\int_{\Omega}\Big(\sum_{i,j,k,l}\alpha_{kl,H_{\epsilon},H}^{ij,z}\partial_{k}U_{i}\partial_{l}V_{j}+\sum_{i,j,k}(\beta_{k,H_{\epsilon},H}^{ij,z}\partial_{k}U_{i}V_{j}+\beta_{k,H_{\epsilon},H}^{ji,z}\partial_{k}V_{j}U_{i})+\sum_{i,j}\gamma_{H_{\epsilon},H}^{ji,z}U_{i}V_{j}\Big)
\end{align*}
where 
\begin{align*}
\alpha_{kl,H_{\epsilon},H}^{ij,z}(\tilde{x}) & =\cfrac{1}{H^{d}}\int_{K_{x,H}(x)}\kappa(y)\nabla_{y}\eta_{x,i}^{(k)}(x,y)\cdot\nabla_{y}\eta_{x,j}^{(l)}(x,y)dy\;\forall\tilde{x}\in K_{x,H}(x),x\in I_{z,H},\\
\beta_{k,H_{\epsilon},H}^{ij,z}(\tilde{x}) & =\cfrac{1}{H^{d}}\int_{K_{x,H}(x)}\kappa(y)\nabla_{y}\eta_{x,i}^{(k)}(x,y)\cdot\nabla_{y}\eta_{x,j}(x,y)dy\;\forall\tilde{x}\in K_{x,H}(x),x\in I_{z,H},\\
\beta_{H_{\epsilon},H}^{ij,z}(\tilde{x}) & =\cfrac{1}{H^{d}}\int_{K_{x,H}(x)}\kappa(y)\nabla_{y}\eta_{x,i}(x,y)\cdot\nabla_{y}\eta_{x,j}(x,y)dy\;\forall\tilde{x}\in K_{x,H}(x),x\in I_{z,H}.
\end{align*}

We remark that the convergence analysis for this effective equation
is similar to the previous case under Assumption \ref{ass:regularity}.

Therefore, in each coarse element $K_{x,H}(x)$, we only need to compute a set of local solutions, $\eta^{(k)}_{x,i},\eta_{x,i}$, in oversampling domain $K^{+}_{x,H}(x)$. However, it can be still computationally expansive in some applications. We will then discuss a representative volume element (RVE) approach to reduce the computational cost.

\begin{assumption}
For all $x$, we assume there exist a $\omega(x)\subset K_{x,H}(x)$ such that
\[
\int_{K_{x,H}(x)}\kappa(y)\nabla_{y}\eta(x,y)\cdot\nabla_{y}\xi(x,y)dy \approx \cfrac{|K_{x,H}|}{|\omega(x)|}\int_{\omega(x)}\kappa(y)\nabla_{y}\eta(x,y)\cdot\nabla_{y}\xi(x,y)dy
\]
for all $\eta, \xi = \eta^{(k)}_{x,i} \text{ or }\eta_{x,i}$.
\end{assumption}
Using this assumption, we can compute the local solutions, $\eta^{(k)}_{x,i},\eta_{x,i}$, in the oversampling domain $\omega^+(x)\subset K^{+}_{x,H}(x)$. The effective coefficients can then be defined as  
\begin{align*}
\alpha_{kl,H_{\epsilon},H}^{ij,RVE}(\tilde{x}) & =\cfrac{|K_{x,H}|}{H^{d}|\omega(x)|}\int_{\omega(x)}\kappa(y)\nabla_{y}\eta_{x,i}^{(k)}(x,y)\cdot\nabla_{y}\eta_{x,j}^{(l)}(x,y)dy\;\forall\tilde{x}\in K_{x,H}(x),x\in I_{z,H},\\
\beta_{k,H_{\epsilon},H}^{ij,RVE}(\tilde{x}) & =\cfrac{|K_{x,H}|}{H^{d}|\omega(x)|}\int_{\omega(x)}\kappa(y)\nabla_{y}\eta_{x,i}^{(k)}(x,y)\cdot\nabla_{y}\eta_{x,j}(x,y)dy\;\forall\tilde{x}\in K_{x,H}(x),x\in I_{z,H},\\
\beta_{H_{\epsilon},H}^{ij,RVE}(\tilde{x}) & =\cfrac{|K_{x,H}|}{H^{d}|\omega(x)|}\int_{\omega(x)}\kappa(y)\nabla_{y}\eta_{x,i}(x,y)\cdot\nabla_{y}\eta_{x,j}(x,y)dy\;\forall\tilde{x}\in K_{x,H}(x),x\in I_{z,H}.
\end{align*}
We remark that using this assumption, we can consider $H\ll H_\epsilon$ and view $\omega(x)^{+}$ as the oversampling domain with size $O(\log(H_\epsilon)H)$
\section{Conclusion}
In this paper, we have examined a multicontinuum homogenization technique for addressing flow equations in high-contrast media. The method involves the creation of local basis functions on a small scale $H_{\epsilon}$, with a reference point $z$, following the concept of Constraint Energy Minimizing method (CEM). These basis functions enable the definition of a CEM downscaling operator. The multicontinuum homogenization is subsequently achieved by utilizing the CEM downscaling operator and taking a local average with respect to $z$ on a scale $H$, where $H_{\epsilon}\ll H\ll 1$.

Assuming the regularity of the local average of the solution, the analysis is based on the approximation property of the CEM downscaling operator and the error estimate between the CEM downscaling operator and the multicontinuum homogenization downscaling operator. We demonstrate that the solution's residual converges to zero at a certain order, thereby validating the convergence of the proposed homogenization method.

\section{Acknowledgement}
The research of Wing Tat Leung is partially supported by the Hong Kong RGC Early Career Scheme (Project number 9048274).
\bibliographystyle{abbrv}
\bibliography{references,references4,references1,references2,references3,decSol}

\begin{thebibliography}{10}

\bibitem{aifantis1979continuum}
E.~C. Aifantis.
\newblock Continuum basis for diffusion in regions with multiple diffusivity.
\newblock {\em Journal of Applied Physics}, 50(3):1334--1338, 1979.

\bibitem{arbogast2002implementation}
T.~Arbogast.
\newblock Implementation of a locally conservative numerical subgrid upscaling
  scheme for two-phase darcy flow.
\newblock {\em Computational Geosciences}, 6(3):453--481, 2002.

\bibitem{arbogast1990derivation}
T.~Arbogast, J.~Douglas, Jr, and U.~Hornung.
\newblock Derivation of the double porosity model of single phase flow via
  homogenization theory.
\newblock {\em SIAM Journal on Mathematical Analysis}, 21(4):823--836, 1990.

\bibitem{arbogast2007multiscale}
T.~Arbogast, G.~Pencheva, M.~F. Wheeler, and I.~Yotov.
\newblock A multiscale mortar mixed finite element method.
\newblock {\em Multiscale Modeling \& Simulation}, 6(1):319--346, 2007.

\bibitem{bedford1972multi}
A.~Bedford and M.~Stern.
\newblock A multi-continuum theory for composite elastic materials.
\newblock {\em Acta Mechanica}, 14(2):85--102, 1972.

\bibitem{bensoussan2011asymptotic}
A.~Bensoussan, J.-L. Lions, and G.~Papanicolaou.
\newblock {\em Asymptotic analysis for periodic structures}, volume 374.
\newblock American Mathematical Soc., 2011.

\bibitem{blanc2023homogenization}
X.~Blanc and C.~Le~Bris.
\newblock {\em Homogenization Theory for Multiscale Problems: An Introduction},
  volume~21.
\newblock Springer Nature, 2023.

\bibitem{bourgeat2004approximations}
A.~Bourgeat and A.~Piatnitski.
\newblock Approximations of effective coefficients in stochastic
  homogenization.
\newblock In {\em Annales de l'IHP Probabilit{\'e}s et statistiques},
  volume~40, pages 153--165, 2004.

\bibitem{bunoiu2019upscaling}
R.~Bunoiu and C.~Timofte.
\newblock Upscaling of a diffusion problem with interfacial flux jump leading
  to a modified barenblatt model, 2019.

\bibitem{chai2018efficient}
Z.~Chai, B.~Yan, J.~Killough, and Y.~Wang.
\newblock An efficient method for fractured shale reservoir history matching:
  The embedded discrete fracture multi-continuum approach.
\newblock {\em Journal of Petroleum Science and Engineering}, 160:170--181,
  2018.

\bibitem{chung2017coupling}
E.~T. Chung, Y.~Efendiev, T.~Leung, and M.~Vasilyeva.
\newblock Coupling of multiscale and multi-continuum approaches.
\newblock {\em GEM-International Journal on Geomathematics}, 8(1):9--41, 2017.

\bibitem{chung2018constraint}
E.~T. Chung, Y.~Efendiev, and W.~T. Leung.
\newblock Constraint energy minimizing generalized multiscale finite element
  method.
\newblock {\em Computer Methods in Applied Mechanics and Engineering},
  339:298--319, 2018.

\bibitem{efendiev2013generalized}
Y.~Efendiev, J.~Galvis, R.~Lazarov, M.~Moon, and M.~Sarkis.
\newblock Generalized multiscale finite element method. symmetric interior
  penalty coupling.
\newblock {\em Journal of Computational Physics}, 255:1--15, 2013.

\bibitem{efendiev2023multicontinuum}
Y.~Efendiev and W.~T. Leung.
\newblock Multicontinuum homogenization and its relation to nonlocal
  multicontinuum theories.
\newblock {\em Journal of Computational Physics}, 474:111761, 2023.

\bibitem{efendiev2014generalized}
Y.~Efendiev, G.~Li, M.~Presho, et~al.
\newblock Generalized multiscale finite element methods: Oversampling
  strategies.
\newblock {\em International Journal for Multiscale Computational Engineering},
  12(6), 2014.

\bibitem{engquist2007heterogeneous}
B.~Engquist, X.~Li, W.~Ren, E.~Vanden-Eijnden, et~al.
\newblock Heterogeneous multiscale methods: a review.
\newblock {\em Communications in Computational Physics}, 2(3):367--450, 2007.

\bibitem{engquist2008asymptotic}
B.~Engquist and P.~E. Souganidis.
\newblock Asymptotic and numerical homogenization.
\newblock {\em Acta Numerica}, 17:147--190, 2008.

\bibitem{fafalis2012capability}
D.~Fafalis, S.~Filopoulos, and G.~Tsamasphyros.
\newblock On the capability of generalized continuum theories to capture
  dispersion characteristics at the atomic scale.
\newblock {\em European Journal of Mechanics-A/Solids}, 36:25--37, 2012.

\bibitem{fish2010computational}
J.~Fish and S.~Kuznetsov.
\newblock Computational continua.
\newblock {\em International Journal for Numerical Methods in Engineering},
  84(7):774--802, 2010.

\bibitem{fish2005multiscale}
J.~Fish and Z.~Yuan.
\newblock Multiscale enrichment based on partition of unity.
\newblock {\em International Journal for Numerical Methods in Engineering},
  62(10):1341--1359, 2005.

\bibitem{gerke1993dual}
H.~H. Gerke and M.~T. Van~Genuchten.
\newblock A dual-porosity model for simulating the preferential movement of
  water and solutes in structured porous media.
\newblock {\em Water resources research}, 29(2):305--319, 1993.

\bibitem{henning2014localized}
P.~Henning and A.~M{\aa}lqvist.
\newblock Localized orthogonal decomposition techniques for boundary value
  problems.
\newblock {\em SIAM Journal on Scientific Computing}, 36(4):A1609--A1634, 2014.

\bibitem{iecsan1997theory}
D.~Ie{\c{s}}an.
\newblock A theory of mixtures with different constituent temperatures.
\newblock {\em Journal of thermal stresses}, 20(2):147--167, 1997.

\bibitem{jenny2003multi}
P.~Jenny, S.~Lee, and H.~A. Tchelepi.
\newblock Multi-scale finite-volume method for elliptic problems in subsurface
  flow simulation.
\newblock {\em Journal of computational physics}, 187(1):47--67, 2003.

\bibitem{jikov2012homogenization}
V.~V. Jikov, S.~M. Kozlov, and O.~A. Oleinik.
\newblock {\em Homogenization of differential operators and integral
  functionals}.
\newblock Springer Science \& Business Media, 2012.

\bibitem{lipton2006homogenization}
R.~Lipton.
\newblock Homogenization and field concentrations in heterogeneous media.
\newblock {\em SIAM journal on mathematical analysis}, 38(4):1048--1059, 2006.

\bibitem{matache2000homogenization}
A.-M. Matache and C.~Schwab.
\newblock Homogenization via p-fem for problems with microstructure.
\newblock {\em Applied Numerical Mathematics}, 33(1-4):43--59, 2000.

\bibitem{owhadi2007metric}
H.~Owhadi and L.~Zhang.
\newblock Metric-based upscaling.
\newblock {\em Communications on Pure and Applied Mathematics: A Journal Issued
  by the Courant Institute of Mathematical Sciences}, 60(5):675--723, 2007.

\bibitem{roberts2017slowly}
A.~Roberts and J.~Bunder.
\newblock Slowly varying, macroscale models emerge from microscale dynamics
  over multiscale domains.
\newblock {\em IMA Journal of Applied Mathematics}, 82(5):971--1012, 2017.

\bibitem{showalter1991micro}
R.~Showalter and N.~Walkington.
\newblock Micro-structure models of diffusion in fissured media.
\newblock {\em Journal of mathematical analysis and applications},
  155(1):1--20, 1991.

\bibitem{vasilyeva2019nonlocal}
M.~Vasilyeva, E.~T. Chung, W.~T. Leung, and V.~Alekseev.
\newblock Nonlocal multicontinuum (nlmc) upscaling of mixed dimensional coupled
  flow problem for embedded and discrete fracture models.
\newblock {\em GEM-International Journal on Geomathematics}, 10:1--23, 2019.

\end{thebibliography}

\end{document}